\theoremstyle{definition}
\newtheorem{defi}{Definition}[section]
\newtheorem{ex}{Example}[section]
\newtheorem{rem}{Remark}[section]
\theoremstyle{plain}
\newtheorem{thm}{Theorem}[section]
\newtheorem{cor}{Corollary}[section]
\newtheorem{lmm}{Lemma}[section]
\numberwithin{equation}{section}
\def\index#1{}
\begin{document}

\begin{frontmatter}
\pretitle{Research Article}

\title{Estimates for distribution of suprema of solutions to higher-order partial differential equations with random initial conditions}

\author[a]{\inits{Yu.}\fnms{Yuriy}~\snm{Kozachenko}\ead[label=e1]{ykoz@ukr.net}}
\author[b]{\inits{E.}\fnms{Enzo}~\snm{Orsingher}\ead[label=e2]{enzo.orsingher@uniroma1.it}}
\author[a]{\inits{L.}\fnms{Lyudmyla}~\snm{Sakhno}\ead[label=e3]{lms@univ.kiev.ua}}
\author[a]{\inits{O.}\fnms{Olga}~\snm{Vasylyk}\thanksref{cor1}\ead[label=e4]{ovasylyk@univ.kiev.ua}\orcid{0000-0002-0880-3751}}
\thankstext[type=corresp,id=cor1]{Corresponding author.}
\address[a]{Department of Probability Theory, Statistics and Actuarial Mathematics, \institution{Taras~Shevchenko National University of Kyiv}, 64 Volodymyrska str.,\break  01601, Kyiv, \cny{Ukraine}}
\address[b]{Department of Statistical Sciences, \institution{Sapienza University of Rome},\break  P.le Aldo Moro, 5, 00185, Rome, \cny{Italy}}



\markboth{Yu. Kozachenko et al.}{Estimates for distribution of suprema of solutions to higher-order partial differential equations}

\begin{abstract}
In the paper we consider higher-order partial differential equations
from the class of linear dispersive equations. We investigate
solutions to these equations subject to random initial conditions given
by harmonizable $\varphi $-sub-Gaussian processes. The main results are
the bounds for the distributions of the suprema for solutions. We
present the examples of processes for which the assumptions of the
general result are verified and bounds are written in the explicit form.
The main result is also specified for the case of Gaussian initial
condition.
\end{abstract}
\begin{keywords}
\kwd{Higher-order dispersive equations}
\kwd{random initial conditions}
\kwd{harmonizable processes}
\kwd{sub-Gaussian processes}
\kwd{distribution of sumpremum of solution}
\kwd{entropy methods}
\end{keywords}
\begin{keywords}[MSC2010]%
\kwd{35G10}
\kwd{35R60}
\kwd{60G20}
\kwd{60G60}
\end{keywords}

\received{\sday{25} \smonth{7} \syear{2019}}
\revised{\sday{10} \smonth{10} \syear{2019}}
\accepted{\sday{16} \smonth{11} \syear{2019}}
\publishedonline{\sday{17} \smonth{12} \syear{2019}}

\end{frontmatter}

\section{Introduction}%
\label{intro}

Numerous recent studies are concerned with evolution equations of the
form
\begin{equation*}
\label{1intr}
\frac{\partial u}{\partial t}+\sum _{j=1}^{l}\frac{\partial ^{2j+1} u}{
\partial x^{2j+1}}+u^{k}\frac{\partial u}{\partial x}=0,
\quad
l,k\in \mathbb{N},
\end{equation*}
which are dispersive equations of order $2l+1$ with a convective term
$u^{k}\frac{\partial u}{\partial x}$; equations with coefficients of
more general form and different kinds of nonlinearity are also the subject
of active research.

The most celebrated equation of this class is the Korteweg--De Vries
(KdV) equation
%
\begin{equation}
\label{2intr}
\frac{\partial u}{\partial t}=-\frac{\partial ^{3} u}{\partial x^{3}}+u\frac{
\partial u}{\partial x},
\end{equation}
which describes the evolution of small amplitude long waves in fluids
and other media.

The Kawahara equation with dispersive terms of the third and fifth
orders
%
\begin{equation}
\label{3intr}
\frac{\partial u}{\partial t}+u\frac{\partial u}{\partial x}+\alpha \frac{
\partial ^{3} u}{\partial x^{3}}+\beta \frac{\partial ^{5} u}{\partial
x^{5}}=0
\end{equation}
is used to model various dispersive phenomena such as plasma waves,
capilarity-gravity water waves, etc. in situations when the cubic
dispersive term\index{cubic dispersive term} is weak or not sufficient. In the most recent research,
generalisations of Equations \eqref{2intr}, \eqref{3intr} have been
suggested and treated.

In the physical and mathematical literature the existence, uniqueness
and analytic properties of solutions to the initial value problem have
been intensively investigated for various linear and nonlinear
dispersive equations.\index{nonlinear dispersive equations} Boundary value problems for such equations were
also considered. We refer, for example, to the comprehensive study
undertaken in the book by Tao \cite{Tao}, among many other books
and papers on the topic.

One should note the importance of the study of constant coefficient
linear dispersive equations for its own sake and also because this
provides prerequisites for the theory of nonlinear dispersive equations,\index{nonlinear dispersive equations}
since the latter are often obtained by perturbation of the linear theory
(\cite{Tao}). Developing the theory of linear equations is also
essential for describing those evolution phenomena where the linear
effects compensate or dominate nonlinear ones. In such situations, one
can expect that the nonlinear solutions display almost the same behavior
as the linear ones.

In the probabilistic literature significant attention has been paid to
the equations of the form
%
\begin{equation}
\label{4intr}
\frac{\partial u}{\partial t}=k_{m}\frac{\partial ^{m}u}{\partial x^{m}},
\quad
x\in R,\ t>0,\ m\ge 2.
\end{equation}

The investigation of fundamental solutions to the equation
\eqref{4intr} can be traced back to works by Bern\v{s}tein and L\'{e}vy.
Such solutions are sign-varying and, based on them, the so-called
pseudoprocesses have been introduced and extensively investigated in the
literature. Note, that \xch{Equations}{equations}~(\ref{4intr}) of even and odd order
possess solutions of different structure and behaviour (for example, see
\cite{Orsingher2012}). We refer to
\cite{KozOrsSakhVas_JOSS2018}, where a review of the recent results on
this topic and additional literature are presented.

We note that in the probabilistic literature equations of the form
\eqref{4intr} and their generalizations are often called higher-order
heat-type equations.

Equations of the form \eqref{4intr} subject to random\index{random initial conditions} initial conditions
were studied in \cite{BKLO}, namely, the asymptotic behavior was
analysed for the rescaled solution to the Airy equation with random\index{random initial conditions}
initial conditions given by weakly dependent stationary processes.\index{stationary processes}

More general odd-order equations of the form
%
\begin{equation}
\label{1.8}
\frac{\partial u}{\partial t}=\sum _{k=1}^{N}a_{k}\frac{\partial ^{2k+1}u}{
\partial x^{2k+1}}\;,
\quad
N=1,2,\ldots,
\end{equation}
subject to the random\index{random initial conditions} initial conditions represented by a strictly
$\varphi $-sub-Gaus\-sian harmonizable processes were considered in
\cite{BKOS,KozOrsSakhVas_JOSS2018}. Rigorous conditions were stated
therein for the existence of solutions and some distributional
properties of solutions were investigated.

The present paper continues the line of research initiated in the papers
\cite{BKOS,KozOrsSakhVas_JOSS2018}. Note that in the mathematical
literature the initial value problems for partial differential equations
have been studied within the framework of different functional spaces,
including the most abstract ones. Here we take into consideration Equation \eqref{1.8} in the framework of special Banach spaces of random
variables,\index{random variables} which constitute a subclass of Orlicz spaces of exponential
type, more precisely, we deal with the spaces of strictly
$\varphi $-sub-Gaussian random processes. These spaces play an important
role in extensions of properties of Gaussian and sub-Gaussian processes.
Basic results on $\varphi $-sub-Gaussian processes and fields can be
found, for example, in \cite{BK,GKN,KO,VKY2005}.

The general methods and techniques developed for $\varphi $-sub-Gaussian
processes, applied to the problems under consideration in the present
paper, permit us to obtain bounds for the distributions of suprema of
the solutions to the initial value problem for Equation
\eqref{1.8}. The bounds are presented in a form different than those
obtained in the paper \cite{KozOrsSakhVas_JOSS2018}, and can be
more useful in particular situations. In such a way, the results of the
present paper complement and extend the results presented in
\cite{KozOrsSakhVas_JOSS2018}.

To make the paper self-contained, in Sections \ref{prelim} we present all important
definitions and facts on harmonizable $\varphi $-sub-Gaussian processes,
which will be used in the derivation of the main results. We also
formulate the result on the conditions of existence of solutions to
\eqref{1.8} with the $\varphi $-sub-Gaussian initial condition (see
\cite{BKOS,KozOrsSakhVas_JOSS2018} for its derivation). The main result
on the bounds for the distributions of supremum of the solutions is
stated in Section \ref{main}. We present examples of processes for which the
assumptions of the general result are verified and bounds are written
explicitly. The main result is also specified for the case of Gaussian
initial condition.

\section{Preliminaries}%
\label{prelim}

Since in the paper we consider a partial differential equation with
random\index{random initial conditions} initial condition given by a real-valued harmonizable
$\varphi $-sub-Gaussian process, in this section we present the
necessary definitions and facts concerning such processes.

\subsection{Harmonizable processes\index{harmonizable processes}}%
\label{harmonizable}

Harmonizable processes\index{harmonizable processes} are a natural extension of stationary processes\index{stationary processes}
to second-order nonstationary ones. Such class of processes allows us
to retain advantages of the Fourier analysis. Harmonizable processes\index{harmonizable processes}
were introduced by Lo\`{e}ve \cite{L}. Recent developments on this
theory are due to Rao \cite{Rao} and Swift \cite{Swift}
among others.

\begin{defi}[\cite{L}] The second-order random function $X=\{X(t),t\in
\mathbb{R}\}$,\break  $ \mathsf{E}X(t)=0$, is called harmonizable if there
exists a second-order random function $y={y(t), t\in \mathbb{R}}$,
$\mathsf{E}y(t)=0$ such that the covariance $\Gamma _{y} (t,s)=
\mathsf{E}y(t)\overline{y(s)}$ has finite variation\index{finite variation} and $X(t)=
\int _{\mathbb{R}}e^{itu}\,\mathrm{d}y(u)$, where the integral is defined
in the mean-square sense.
\end{defi}

\begin{thm}[\cite{L} Lo\`{e}ve theorem]
\label{Loev_thm}
The second-order random function
$X=\{X(t),\allowbreak t\in \mathbb{R}\}$, $ \mathsf{E}X(t)=0$, is harmonizable if
and only if there exists a covariance function $\Gamma _{y} (u,v)$ with
finite variation\index{finite variation} such that
%
\begin{equation}
\label{Gamma_x}
\Gamma _{x}(t,s)= \mathsf{E}X(t) \overline{X(s)}=
\int _{\mathbb{R}}
\int _{\mathbb{R}}e^{i(tu-sv)}\,\mathrm{d}\Gamma _{y} (u,v).
\end{equation}
\end{thm}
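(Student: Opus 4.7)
The plan is to prove the two implications separately, using the fact that a covariance function of finite variation generates a well-defined bi-measure $\Gamma_y(\cdot,\cdot)$ on $\mathbb{R}^2$ on rectangles and, by the finite-variation hypothesis, extends to a complex measure of finite total variation on the Borel $\sigma$-algebra of $\mathbb{R}^2$. This lets us interpret the double integral in \eqref{Gamma_x} as an ordinary Lebesgue--Stieltjes integral and lets us build the appropriate $L^2$ isometry.

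For the necessity direction, assume $X(t)=\int_{\mathbb{R}}e^{itu}\,\mathrm{d}y(u)$ in the mean-square sense, where $y$ has covariance $\Gamma_y$ of finite variation. I would approximate $X(t)$ by Riemann--Stieltjes sums $\sum_k e^{itu_k}\Delta y_k$ and $\overline{X(s)}$ by the analogous sums $\sum_j e^{-isv_j}\overline{\Delta y_j}$; taking expectations term by term and using bilinearity of covariance yields $\sum_{k,j} e^{i(tu_k-sv_j)}\Delta\Delta\Gamma_y$, which converges to $\int\!\int e^{i(tu-sv)}\,\mathrm{d}\Gamma_y(u,v)$ because the bi-measure has finite total variation and the integrand is bounded and continuous. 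Passing to the limit (justified by the mean-square convergence defining the integral and by dominated convergence for the finite-variation bi-measure) gives the representation \eqref{Gamma_x}.

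For the sufficiency direction, I would construct the random spectral function $y$ from $X$ via an isometry. Define on finite linear combinations $f(u)=\sum_k a_k e^{it_k u}$ the map $J(f):=\sum_k a_k X(t_k)$. Using the assumed representation \eqref{Gamma_x},
\begin{equation*}
\mathsf{E}|J(f)|^2 = \sum_{k,j} a_k\overline{a_j}\,\Gamma_x(t_k,t_j) = \int\!\int f(u)\overline{f(v)}\,\mathrm{d}\Gamma_y(u,v) = \|f\|^2_{L^2(\Gamma_y)},
\end{equation*}
so $J$ is a linear isometry from the trigonometric polynomials, densely extendable to the closure $\mathcal{H}_y$ of their span in $L^2(\Gamma_y)$, into $L^2(\Omega)$. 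Setting $y(u):=J(\mathbf{1}_{(-\infty,u]})$ (after checking the indicator belongs to $\mathcal{H}_y$ by an approximation argument) defines a second-order centered process whose covariance equals $\Gamma_y$ by construction, and the stochastic integral $\int e^{itu}\,\mathrm{d}y(u)$ is precisely $J(e^{it\cdot})=X(t)$.

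The main obstacle will be justifying that trigonometric polynomials are dense in $L^2(\Gamma_y)$ well enough to approximate $\mathbf{1}_{(-\infty,u]}$, and more fundamentally, handling the fact that $\Gamma_y$ is a bi-measure rather than a product of measures: the double integral in \eqref{Gamma_x} must be interpreted via the finite Vitali variation so that Fubini and dominated convergence apply. Once this analytic point is handled and the bi-measure is treated as a bona fide complex measure on $\mathbb{R}^2$, both the passage to the limit in the necessity part and the density argument in the sufficiency part become standard Hilbert-space manipulations.
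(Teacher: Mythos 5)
The paper does not prove this statement at all: Theorem~\ref{Loev_thm} is quoted verbatim from Lo\`{e}ve's book and used as a known classical fact, so there is no internal proof to compare yours against. Your sketch is the standard spectral-representation argument (essentially Lo\`{e}ve's own): necessity by passing expectations through the mean-square limit of Riemann--Stieltjes sums, sufficiency by building the isometry $J$ from the span of the exponentials $e^{it\cdot}$ into $L^{2}(\Omega)$ and recovering $y$ from indicators. The outline is sound, and you correctly identify the two genuine technical points -- interpreting the finite-Vitali-variation bi-measure as a complex measure on $\mathbb{R}^{2}$ so that dominated convergence applies, and the density argument needed to place $\mathbf{1}_{(-\infty,u]}$ in the closed span of the exponentials. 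Two smaller items you should still make explicit if you carry this out in full: the quadratic form $\int\!\int f(u)\overline{f(v)}\,\mathrm{d}\Gamma_{y}(u,v)$ is only positive \emph{semi}-definite, so ``$L^{2}(\Gamma_{y})$'' must be understood as the quotient by its null space before you can speak of an isometry and a completion; and once $y$ is constructed you must check that its covariance, namely $\mathsf{E}\,y(u)\overline{y(v)}=\langle \mathbf{1}_{(-\infty,u]},\mathbf{1}_{(-\infty,v]}\rangle_{\Gamma_{y}}$, indeed reproduces a function of finite variation and that $\mathsf{E}\,y(u)=0$, which follows from $\mathsf{E}X(t)=0$ and the continuity of $J$. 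Neither point is an obstacle; both are routine in the Karhunen--Lo\`{e}ve/Cram\'{e}r framework the paper's references \cite{L,Rao} work in.
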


\begin{rem}
In the theorem above, the covariance function $\Gamma _{y}$ is of bounded
Vitali variation (see \cite{Rao,Swift}). This fact guarantees that
integral in (\ref{Gamma_x}) is in the Lebesgue sense. The function
$\Gamma _{y}$ is also called the spectral function or bi-measure of the
process~$X$.
\end{rem}

\begin{rem}
In what follows, an integral of the type $\int \int  _{A}f(t,s)
\mathrm{d}g(t,s)$ is understood as a common Le\-bes\-gue--Stiel\-tjes
integral, that is, the limit of the sum $\sum \sum f(t,\break s)\Delta _{i}
\Delta _{j} g(t,s)$, and an integral of the type $\int \int  _{A}f(t,s)|
\mathrm{d}g(t,s)|$ is is understood as the limit of the sum
$\sum \sum f(t,s)|\Delta _{i}\Delta _{j} g(t,s)|$.
\end{rem}

Below we shall focus on real-valued harmonizable processes.

\begin{defi}
\label{real_harmonizable}
Real-valued second order random function $X=\{X(t),t\in \mathbb{R}\}$
is called harmonizable, if there exists a real-valued second order
function $y(u)$, $ \mathsf{E}y(u)\,{=}\break0$, $u\in \mathbb{R}$, such that
$X(t)=\int _{-\infty }^{\infty } \sin tu \,\mathrm{d}y(u)$ or
$X(t)=\int _{-\infty }^{\infty } \cos tu \,\mathrm{d}y(u)$ and the
covariance function $\Gamma _{y}(t,s)=\mathsf{E}y(t)y(s)$ has finite
variation.\index{finite variation} The integral is defined in the mean-square sense.
\end{defi}

\begin{thm}
\label{Loev_thm_real}
Real-valued second order function $X = \{X(t), t\in \mathbb{R}\}$,
$ \mathsf{E}X(t)=0 $, is harmonizable if and only if there exists the
covariance function $\Gamma _{y}(u,v)$ with finite variation\index{finite variation} such that
\begin{equation*}
\Gamma _{x}(t,s)=\mathsf{E}X(t)X(s)=\int _{\mathbb{R}} \int _{\mathbb{R}}
\kappa (t u) \kappa (s v) \,\mathrm{d}\Gamma _{y} (u,v),
\end{equation*}
where $\kappa (v) = \cos v$ or $\kappa (v) = \sin v$.
\end{thm}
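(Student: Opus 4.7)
The plan is to reduce both directions of the equivalence to the standard mean-square isometry for integrals against a second-order function of finite variation, mirroring the argument that proves the complex Loève theorem (Theorem 2.1).

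\textbf{Necessity.} Assume $X(t)=\int_{\mathbb{R}} \kappa(tu)\,\mathrm{d}y(u)$ with $\kappa\in\{\sin,\cos\}$ and $\Gamma_y$ of finite variation, as in Definition 2.2. For fixed $t,s$ I approximate $u\mapsto\kappa(tu)$ uniformly on any bounded interval by simple step functions $\varphi_n$ (and similarly $\psi_n$ for $v\mapsto\kappa(sv)$), using the boundedness and uniform continuity of $\kappa$. For simple integrands the bilinearity of the mean-square integral together with the definition of $\Gamma_y$ yields
$$\mathsf{E}\left(\int \varphi_n\,\mathrm{d}y\right)\left(\int \psi_n\,\mathrm{d}y\right) = \sum_{i,j}\varphi_n(u_i)\psi_n(v_j)\,\Delta_i\Delta_j\Gamma_y.$$
Passing to the mean-square limit on the stochastic side and to the Lebesgue--Stieltjes limit on the deterministic side, both justified by the finite Vitali variation of $\Gamma_y$ and the convention of Remark 2.2, produces the claimed identity for $\Gamma_x$.

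\textbf{Sufficiency.} Conversely, given the integral representation of $\Gamma_x$, set $H_X=\overline{\mathrm{span}}\{X(t):t\in\mathbb{R}\}\subset L^2(\Omega)$ equipped with the $L^2(\Omega)$ inner product, and consider the linear span $\mathcal{K}$ of the functions $u\mapsto\kappa(tu)$, $t\in\mathbb{R}$, endowed with the inner product $\langle \kappa(t\cdot),\kappa(s\cdot)\rangle = \int\int \kappa(tu)\kappa(sv)\,\mathrm{d}\Gamma_y(u,v)$. The hypothesis says exactly that the correspondence $\kappa(t\cdot)\mapsto X(t)$ is inner-product-preserving between $\mathcal{K}$ and a dense subset of $H_X$, so it extends by continuity to an isometry of the completions. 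Transporting indicators of Borel subsets of $\mathbb{R}$ through this isometry yields a centered real-valued random set function $y$ whose covariance coincides with $\Gamma_y$, and the image of $\kappa(t\cdot)$ is by construction the mean-square integral $\int \kappa(tu)\,\mathrm{d}y(u)$. Therefore $X(t) = \int \kappa(tu)\,\mathrm{d}y(u)$, which is precisely the representation required by Definition 2.2.

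\textbf{Main obstacle.} The delicate step is the sufficiency direction: one must verify that the random set function $y$ produced by the Karhunen-type extension really has a covariance $\Gamma_y$ of finite Vitali variation, not merely a positive measure. This uses density of simple functions in the auxiliary Hilbert space built from $\Gamma_y$ together with the Lebesgue--Stieltjes interpretation of Remark 2.2, which is precisely what enables the double-sum bilinear identity to survive the limit procedure; once this framework is in place the remainder of the argument is a routine approximation.
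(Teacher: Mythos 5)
Your proposal is essentially a from-scratch reconstruction of the Lo\`{e}ve--Karhunen representation machinery, whereas the paper disposes of this statement in one line: it simply observes that the real-valued version follows from the complex Lo\`{e}ve theorem (Theorem~\ref{Loev_thm}), which is already quoted from \cite{L}; the reduction amounts to writing $e^{itu}=\cos(tu)+i\sin(tu)$ and passing between the real process/bimeasure and its complex counterpart. Your route is more self-contained and would be the right thing to do if Theorem~\ref{Loev_thm} were not available, and your necessity direction (isometry for step-function integrands plus a mean-square/Lebesgue--Stieltjes limit under finite Vitali variation) is the standard and correct argument. Two remarks on the sufficiency direction. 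First, the step you flag as the ``main obstacle'' is not actually an obstacle here: $\Gamma_y$ with finite variation is part of the hypothesis, so the covariance of the constructed set function $y$ inherits finite variation by construction rather than needing separate verification. Second, the genuinely delicate point is one you pass over: to ``transport indicators of Borel sets'' you need the indicators to lie in the completion of $\mathcal{K}=\mathrm{span}\{\kappa(t\cdot)\}$, and for $\kappa=\cos$ this span contains only even functions, so arbitrary indicators are not approximable; one must either work with symmetric sets (equivalently a spectral function on the half-line) or argue as in the classical real spectral representation. This is repairable but is the place where your argument, as written, has a gap; the paper's one-line deduction from the complex theorem sidesteps it entirely by inheriting the construction from \cite{L}.
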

The theorem above follows from Theorem~\ref{Loev_thm}.

\subsection{$\varphi $-sub-Gaussian random variables and processes}%
\label{phisubgaussian}

Here we present some basic facts from the theory of
$\varphi $-sub-Gaussian random variables and processes, as well as some
necessary results.

A continuous even convex function $\varphi $ is called an \textit{Orlicz
$N$-function}, if $\varphi (0)=0, \varphi (x)>0$ as $x\neq 0$, and the
following conditions hold:\vadjust{\goodbreak} $\lim _{\substack{ x\rightarrow 0\\}}\frac{\varphi (x)}{x}=0$,\break
$\lim _{\substack{ x\rightarrow \infty\\}}\frac{\varphi (x)}{x}=\infty $. The function $\varphi ^{*}$ defined by
$\varphi ^{*}(x)=\sup _{\substack{ y\in \mathbb{R}\\}}(xy-\varphi (y))$ is called \textit{the Young--Fenchel transform (or
convex conjugate)} of the function $\varphi $ \cite{BK,Kr.Rut}.

We say that $\varphi $ satisfies the \textit{Condition $Q$}, if
$\varphi $ is such an $N$-function that\break  $\liminf
 _{\substack{ x\rightarrow 0\\}}\frac{\varphi (x)}{x^{2}}=c>0$, where the case $c=\infty $ is possible
\cite{GKN,KO}.

In what follows we will always deal with $N$-functions for which
condition $Q$ holds.

Examples of N-functions, for which the condition $Q$ is satisfied:
%
\begin{align}
&\varphi (x)=\frac{|x|^{\alpha }}{\alpha }, \quad 1 <\alpha \leq 2,
\nonumber
\\
&\varphi (x)=
{
\begin{cases}
\frac{|x|^{\alpha }}{\alpha }, \quad |x|\geq 1,\alpha > 2;
\\
\frac{|x|^{2}}{\alpha }, \quad |x|\leq 1,\alpha > 2.
\\
\end{cases}
}
\label{ex_phi1}
\end{align}

\begin{defi}[\cite{GKN,KO}] Let $\{\Omega , L,\mathbf{P}\}$ be a standard
probability space. The random variable\index{random variables} $\zeta $ is said to be
$\varphi $-sub-Gaussian (or belongs to the space $\mathrm{Sub}_{
\varphi }(\Omega )$), if $\mathsf{E}\zeta =0$, $ \mathsf{E}\exp \{
\lambda \zeta \}$ exists for all $\lambda \in \mathbb{R}$ and there
exists a constant $a>0$ such that the following inequality holds for all
$\lambda \in \mathbb{R}:$ $\mathsf{E}\exp \{\lambda \zeta \} \leq
\exp \{\varphi (\lambda a)\}$.
\end{defi}

The space $ \mathrm{Sub}_{\varphi }(\Omega )$ is a Banach space with
respect to the norm \cite{GKN,KO}
\begin{equation*}
\tau _{\varphi }(\zeta ) = \inf \{a > 0: \mathsf{E}\exp \{\lambda
\zeta \} \leq \exp \{ \varphi (a \lambda )\}\},
\end{equation*}
which is called the $\varphi $-sub-Gaussian standard of the random
variable\index{random variables} $\zeta $.

Centered Gaussian random variables $\zeta \sim N(0,\sigma ^{2})$\index{random variables} are
$\varphi $-sub-Gaussian with $\varphi (x)=\frac{x^{2}}{2}$ and
$\tau ^{2}_{\varphi }(\zeta )=\mathsf{E}\zeta ^{2}=\sigma ^{2}$. In the
case $\varphi (x)=\frac{x^{2}}{2}$, $\varphi $-sub-Gaussian random
variables are simply sub-Gaussian.

\begin{defi}
A family $\Delta $ of random variables\index{random variables} ${\zeta } \in \mathrm{Sub}_{
\varphi }(\Omega )$ is called strictly $\varphi $-sub-Gaussian (see
\cite{KK}), if there exists a constant $C_{\Delta }$ such that for all
countable sets $I$ of random variables\index{random variables} ${\zeta _{i}}\in \Delta $,
$i\in I$, the following inequality holds:
%
\begin{equation}
\label{*}
\tau _{\varphi }\left ( \sum _{i \in I} \lambda _{i} \zeta _{i} \right )
\leq C_{\Delta }\left ( \mathsf{E}\left ( \sum _{i \in I}\lambda _{i}
\zeta _{i} \right )^{2}\right )^{1/2}.
\end{equation}
\end{defi}

The constant $C_{\Delta }$ is called the \textit{determining} constant
of the family $\Delta $.

The linear closure of a strictly $\varphi $-sub-Gaussian family
$\Delta $ in the space $L_{2}(\Omega )$ is the strictly
$\varphi $-sub-Gaussian with the same determining constant
\cite{KK}.

\begin{defi}
The random process\index{random process} $\zeta =\{\zeta (t), t\in \mathbf{T}\}$ is called
(strictly) $\varphi $-sub-Gaussian if the family of random variables\index{random variables}
$\{\zeta (t), t\in \mathbf{T}\}$ is (strictly) $\varphi $-sub-Gaussian
\cite{KK}.
\end{defi}

The following example of strictly $\varphi $-sub-Gaussian random process
is important for our study. The solutions of partial differential
equations considered in the next sections are of the same form as in
this example.

\begin{ex}[\cite{KK}]
\label{ex_kernel}%
Let $K$ be a deterministic kernel and suppose that the
process $X= \{  X(t), t\in \mathbf{T} \}  $ can be
represented in the form
\begin{equation*}
X(t) =\int \limits _{T} K(t,s) \,\mathrm{d}\xi (s),
\end{equation*}
where $\xi (t)$, $t\in \mathbf{T}$, is a strictly
$\varphi $-sub-Gaussian random process and the integral above is defined
in the mean-square sense. Then the process $X(t)$, $t\in \mathbf{T}$,
is a strictly $\varphi $-sub-Gaussian random process with the same
determining constant.
\end{ex}

The notion of admissible function\index{admissible function} for the space $\mathrm{Sub}_{\varphi
}(\Omega )$ will be used to state the conditions of the existence of
solutions of partial differential equations considered in the paper and
to write down the bounds for suprema\index{bounds for suprema} of these solutions.

\begin{lmm}[\cite{KR,KRP}, p. 146]
\label{lmm12}%
Let $Z(u),u\geq 0$, be a
continuous, increasing function such that $ Z(u)>0$ and the function
$\frac{u}{Z(u)}$ is non-decreasing for $u>u_{0}$, where $u_{0}\geq 0$
is a constant. Then for all $u,v\neq 0$
%
\begin{equation}
\label{**}
\left |\sin \frac{u}{v}\right |\leq \frac{Z\left (\left |u\right |+u
_{0}\right )}{Z\left (\left |v\right |+u_{0}\right )}
\end{equation}
\end{lmm}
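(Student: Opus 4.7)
The plan is to use the elementary bounds $|\sin x|\le 1$ and $|\sin x|\le |x|$ and split into two cases depending on whether the argument $u/v$ is small or large.

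\textbf{Case 1: $|u|\ge |v|$.} Here I would simply use $|\sin(u/v)|\le 1$. Since $Z$ is increasing and $|u|+u_0\ge |v|+u_0$, we get $Z(|u|+u_0)\ge Z(|v|+u_0)$, so the right-hand side of \eqref{**} is already $\ge 1$ and the inequality is trivial.

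\textbf{Case 2: $|u|<|v|$.} In this regime $|u/v|<1$, so I would use $|\sin(u/v)|\le |u|/|v|$ and reduce the lemma to showing
\begin{equation*}
\frac{|u|}{|v|}\le \frac{Z(|u|+u_0)}{Z(|v|+u_0)}.
\end{equation*}
Setting $a=|u|+u_0$ and $b=|v|+u_0$, both quantities exceed $u_0$ (recall $u,v\ne 0$), and $a\le b$. The hypothesis that $w/Z(w)$ is non-decreasing for $w>u_0$ then yields $a/Z(a)\le b/Z(b)$, i.e.\ $(|u|+u_0)/(|v|+u_0)\le Z(|u|+u_0)/Z(|v|+u_0)$. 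Finally, since $|u|\le |v|$, the elementary inequality $|u|/|v|\le (|u|+u_0)/(|v|+u_0)$ (just cross-multiply) chains on the left and completes the proof.

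There is no serious obstacle: the only mild subtlety is making sure we can invoke the monotonicity of $w/Z(w)$ at the shifted arguments, which is precisely why the shift by $u_0$ appears on both sides of \eqref{**}, and checking that the passage from $|u|/|v|$ to $(|u|+u_0)/(|v|+u_0)$ goes the right way, which works exactly because we are in the regime $|u|\le |v|$.
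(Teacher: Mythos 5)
Your proof is correct. The paper does not prove Lemma~\ref{lmm12} at all --- it is quoted from \cite{KR,KRP} without argument --- and your two-case argument (using $|\sin x|\le 1$ together with the monotonicity of $Z$ when $|u|\ge|v|$, and $|\sin x|\le|x|$ together with the monotonicity of $w/Z(w)$ on $(u_0,\infty)$ plus the cross-multiplication step $|u|/|v|\le(|u|+u_0)/(|v|+u_0)$ when $|u|<|v|$) is exactly the standard proof of this bound and is complete as written.
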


\begin{defi}
Function $Z(u), u\geq 0$, is called \textit{admissible} for the space
$\mathrm{Sub}_{\varphi }(\Omega )$, if for $Z(u)$ conditions of
Lemma~\ref{lmm12} hold and for some $\varepsilon >0$ the integral
\begin{equation*}
\int _{0}^{\varepsilon }\Psi \left (\ln \left (Z^{(-1)}\left (\frac{1}{s}
\right )-u_{0}\right )\right )ds
\end{equation*}
converges, where $\Psi (v)=\frac{v}{\varphi ^{(-1)}(v)}, v>0$.
\end{defi}

For example, the function $ Z(u) = u^{\rho }$, $ 0 < \rho <1$, is
an admissible\index{admissible function} function for the space $\mathrm{Sub}_{\varphi }(\Omega
)$ if $\varphi $ is defined in (\ref{ex_phi1}).

Characteristic feature of $\varphi $-sub-Gaussian random variables is
the exponential\break  bounds\index{exponential bounds} for their tail probabilities. For
$\varphi $-sub-Gaussian processes estimates for their suprema are
available in different forms, see, for example, the book~\cite{BK}.

To derive our main results, we shall use the following theorem on the
distribution of supremum of a $\varphi $-sub-Gaussian random process,
proved in the paper~\cite{KO2016} (see also \cite{BK}).

\begin{thm}
\label{Theorem5_KO2016}
Let $X = \{ X(t), \, t \in \mathbf{T}\}$ be a $\varphi $-sub-Gaussian
process and $\rho _{X}$ be the pseudometrics generated by $X$, that is,
$ \rho _{X} (t, s) = \tau _{\varphi }(X(t) - X(s))$, $ t, s \in
\mathbf{T} $. Assume that the pseudometric space $(\mathbf{T}, \rho
_{X})$\index{pseudometric space} is separable, the process $X$ is separable on $(\mathbf{T},
\rho _{X})$ and $\varepsilon _{0} :=\sup  _{t\in \mathbf{T}}
\tau _{\varphi }(X(t)) < \infty $.

Let $r(x),x\ge 1$, be a non-negative, monotone increasing function such
that the function $r(e^{x}),x\ge 0$, is convex, and for $0<\varepsilon
\le \varepsilon _{0}$
%
\begin{equation}
\label{cond_I_r}
I_{r}(\varepsilon ):=\int \limits _{0}^{\varepsilon }
r (N(v))\,d v <
\infty ,
\end{equation}
where $\{N(v),v>0\}$ is the massiveness of the pseudometric space
$(\mathbf{T}, \rho _{X})$,\index{pseudometric space} that is, $N(v)$ denotes the smallest number
of elements in a $v$-covering of $T$, and the covering is formed by
closed balls of radius of at most $v$.

Then for all $\lambda >0$, $0<\theta <1$ and $u>0$ it holds
%
\begin{equation}
\mathsf{E}\exp \left \{
\lambda \sup \limits _{t\in \mathbf{T}}|X(t)|\right \}  \le Q(\lambda ,
\theta )
\end{equation}
and
%
\begin{equation}
P \{\sup \limits _{t\in \mathbf{T}} |X(t)| \geq u\} \leq 2A(\theta ,u),
\end{equation}
where
\begin{align*}
Q(\lambda ,\theta )&:=\exp \left \{  \varphi \left (\frac{\lambda
\varepsilon _{0}}{1-\theta }\right )\right \}  r^{(-1)}\left (\frac{I
_{r}(\theta \varepsilon _{0})}{\theta \varepsilon _{0}}\right ),
\\
A(\theta ,u) &= \exp \Big \{- \varphi ^{*} \Big ( \frac{u (1-\theta )}{
\varepsilon _{0}} \Big ) \Big \}r^{(-1)}\left (\frac{I_{r}(\theta
\varepsilon _{0})}{\theta \varepsilon _{0}}\right ).
\end{align*}
\end{thm}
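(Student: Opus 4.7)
The plan is to derive both bounds by the classical generic-chaining argument adapted to the $\varphi$-sub-Gaussian framework, coupled with an entropy-type maximal inequality built from the convex-increasing function $r(e^{x})$. First I would construct a geometric sequence of approximating nets: for $k\ge 0$ set $\varepsilon_{k}=\theta^{k}\varepsilon_{0}$ and choose an $\varepsilon_{k}$-net $T_{k}\subset\mathbf{T}$ of cardinality at most $N_{k}:=N(\varepsilon_{k})$. For each $t\in\mathbf{T}$ fix a chain $t_{k}\in T_{k}$ with $\rho_{X}(t,t_{k})\le\varepsilon_{k}$; then $\rho_{X}(t_{k},t_{k-1})\le 2\varepsilon_{k-1}$, yielding the decomposition
\[
\sup_{t\in\mathbf{T}}|X(t)|\;\le\;\max_{s\in T_{0}}|X(s)|+\sum_{k=1}^{\infty}D_{k},
\]
where $D_{k}$ is the maximum of $|X(s)-X(s')|$ over the at most $N_{k}N_{k-1}$ admissible pairs $(s,s')\in T_{k}\times T_{k-1}$ with $\rho_{X}(s,s')\le 2\varepsilon_{k-1}$.

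For each such pair, the $\varphi$-sub-Gaussian property and the definition of $\tau_{\varphi}$ give the pair-wise estimate $\mathsf{E}\exp\{\lambda_{k}(X(s)-X(s'))\}\le\exp\{\varphi(2\lambda_{k}\varepsilon_{k-1})\}$ for level-dependent parameters $\lambda_{k}$. The next step is to upgrade these bounds to an exponential-moment estimate for $D_{k}$ by absorbing the union bound across the $N_{k}N_{k-1}$ pairs; this is where the convexity of $r(e^{x})$ together with Jensen's inequality enters, exactly as in the entropy arguments of \cite{KO2016,BK}. Summing the $\varphi$-sub-Gaussian standards over $k$ produces the geometric series $\sum_{k\ge 1}2\varepsilon_{k-1}=2\varepsilon_{0}/(1-\theta)$; together with the convexity of $\varphi$ this aggregates into the factor $\exp\{\varphi(\lambda\varepsilon_{0}/(1-\theta))\}$ appearing in $Q(\lambda,\theta)$. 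The discrete entropy contribution at each level, being of the form $(\varepsilon_{k-1}-\varepsilon_{k})\cdot(\text{function of }N_{k})$, is dominated by the monotone integral $\int_{0}^{\theta\varepsilon_{0}}r(N(v))\,dv=I_{r}(\theta\varepsilon_{0})$; applying $r^{(-1)}$ to the normalised integral yields the factor $r^{(-1)}(I_{r}(\theta\varepsilon_{0})/(\theta\varepsilon_{0}))$.

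The tail inequality then follows from the MGF bound by exponential Chebyshev: for any $\lambda>0$,
\[
P\bigl\{\sup_{t\in\mathbf{T}}|X(t)|\ge u\bigr\}\;\le\;e^{-\lambda u}Q(\lambda,\theta),
\]
and optimising over $\lambda$ converts $\varphi(\lambda\varepsilon_{0}/(1-\theta))-\lambda u$ into $-\varphi^{*}(u(1-\theta)/\varepsilon_{0})$ by the Young--Fenchel duality. The prefactor $2$ accounts for bounding $\sup_{t}X(t)$ and $\sup_{t}(-X(t))$ separately in order to recover the absolute value supremum.

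The main technical obstacle is precisely the passage from the pair-wise $\varphi$-sub-Gaussian bounds at each chaining level to a sharp exponential-moment bound on $D_{k}$, and, more importantly, verifying that the telescoping sum across levels collapses into the single compact factor $r^{(-1)}(I_{r}(\theta\varepsilon_{0})/(\theta\varepsilon_{0}))$ rather than an unwieldy series in $k$. This step requires the full strength of the convexity of $r(e^{x})$, the admissibility Lemma~\ref{lmm12}, and the hypothesis $I_{r}(\theta\varepsilon_{0})<\infty$; the remaining pieces of the argument are routine once this aggregation has been completed.
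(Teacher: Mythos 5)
First, a point of comparison: the paper does not prove Theorem~\ref{Theorem5_KO2016} at all --- it is imported verbatim from \cite{KO2016} (see also \cite{BK}) --- so there is no internal proof to measure your attempt against. Judged on its own terms, your proposal is a plausible outline of the standard entropy/chaining argument that underlies results of this type, but it is not a proof. The step you yourself flag as ``the main technical obstacle'' --- upgrading the pairwise $\varphi$-sub-Gaussian bounds on the at most $N_kN_{k-1}$ increments at each level to an exponential-moment bound on $D_k$, and then collapsing the sum over levels into the single factor $r^{(-1)}\bigl(I_r(\theta\varepsilon_0)/(\theta\varepsilon_0)\bigr)$ --- is precisely the content of the theorem, and you dispose of it by saying it works ``exactly as in the entropy arguments of \cite{KO2016,BK}''. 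Deferring the only nontrivial step to the very references the theorem is quoted from makes the argument circular. What is needed there is the maximal inequality for convex increasing $s(x)=r(e^{x})$, namely that $r\bigl(\mathsf{E}\exp\{\lambda\max_i\zeta_i\}\bigr)\le\sum_i\mathsf{E}\,r\bigl(\exp\{\lambda\zeta_i\}\bigr)$ via Jensen and a union bound, followed by a weighted Jensen inequality across the levels $k$ (with weights proportional to $\varepsilon_{k-1}-\varepsilon_k$) and the monotonicity of $N(\cdot)$ to dominate the resulting series by $I_r(\theta\varepsilon_0)/(\theta\varepsilon_0)$. None of this is carried out, and without it the compact form of $Q(\lambda,\theta)$ is simply asserted.

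There are also two concrete slips. First, your bookkeeping of the $\tau_\varphi$-budget along the chain gives $\sum_{k\ge1}2\varepsilon_{k-1}=2\varepsilon_0/(1-\theta)$, which would produce $\exp\{\varphi(2\lambda\varepsilon_0/(1-\theta))\}$ rather than the stated $\exp\{\varphi(\lambda\varepsilon_0/(1-\theta))\}$; the actual argument must distribute $\lambda$ among the levels (a generalized H\"older inequality for the $\tau_\varphi$-norm) more carefully than your sketch indicates. Second, Lemma~\ref{lmm12} is the $|\sin(u/v)|$ estimate for admissible functions used later in the analysis of the PDE solution; it plays no role in this entropy theorem, so invoking it here is a misattribution. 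The final passage from the MGF bound to the tail bound via exponential Chebyshev and Young--Fenchel duality is correct as you state it.
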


\begin{rem}
The integrals of the form
%
\begin{equation}
\label{entropy}
I(\varepsilon ):=\int \limits _{0}^{\varepsilon }
g (N(v))\,d v , \quad
\varepsilon >0,
\end{equation}
with $g(v)$, $v\ge 1$, being a nonnegative nondecreasing function, are
called entropy integrals. Entropy characteristics of the parametric set
$T$ with respect to the pseudometrics $ \rho _{X} (t, s) = \tau _{
\varphi }(X(t) - X(s))$, $ t, s \in \mathbf{T}$, generated by the
process $X=\{X(t), t\in \mathbf{T}\}$, and the rate of growth of metric
massiveness\index{metric massiveness} $N(v)$, or metric entropy\index{metric entropy} $H(v):=\ln (N(v))$, are closely
related to the properties of the process $X$ (see \cite{BK} for
details).\looseness=-1

The integrals \eqref{entropy} play an important role in the study of
such properties as boundedness and continuity of sample paths of a
process, these integrals appear in estimates for modulii of continuity
and distribution of supremum.

General results of this kind for $\varphi $-sub-Gaussian processes are
related to the convergence of the integrals \eqref{entropy}, where for
$g(v)$ one takes $\Psi (\ln (v))$ with $\Psi (v)=
\frac{v}{\varphi ^{(-1)}(v)}, v>0$. Theorem \ref{Theorem5_KO2016} is more
suitable for the case of ``moderate'' growth of the metric entropy\index{metric entropy} and
can lead to improved inequalities for upper bound for the distribution
of supremum of the process, in comparison with more general inequalities
involving the integrals based on the above function $\Psi $ (see
\cite{BK}).

Entropy methods are also used in the modern approximation theory.
Theorem \ref{Theorem5_KO2016} was applied, for example, in
\cite{KO2016}, for developing uniform approximation schemes for
$\varphi $-sub-Gaussian processes.

\end{rem}

\subsection{Solutions of linear odd-order heat-type equations with
random\index{random initial conditions} initial conditions}%
\label{subsection_solution}

Let us consider the linear equation
%
\begin{equation}
\label{linear_eqn}
\sum _{k=1}^{N} a_{k}\frac{\partial ^{2k+1} U(t,x)}{\partial x^{2k+1}}=
\frac{\partial U(t,x)}{\partial t} ,\quad  t>0,\, x \in \mathbb{R},
\end{equation}
subject to the random\index{random initial conditions} initial condition
%
\begin{equation}
\label{random_ini_cond}
U(0,x)=\eta (x),\quad  x \in \mathbb{R},
\end{equation}
and $\{a_{k}\}_{k=1}^{N}$ are some constants.

The next theorem gives the conditions of the existence of the solutions
of the equation above with a $\varphi $-sub-Gaussian initial condition
$\eta (x)$ (see \cite{BKOS,KozOrsSakhVas_JOSS2018}).

\begin{thm}
\label{thm_solution}
Let $\eta =\{\eta (x),x\in \mathbb{R}\}$ be a real harmonizable (see
Definition~\ref{real_harmonizable}) and strictly $\varphi $-sub-Gaussian
random process.\vadjust{\goodbreak} Also let $Z=\{Z(u),u\geq 0\}$ be a function admissible\index{admissible function}
for the space $\mathrm{Sub}_{\varphi }(\Omega )$. Assume that the
following integral converges
%
\begin{equation}
\label{exists_classic}
\int _{\mathbb{R}}\int _{\mathbb{R}}\left |\lambda \right |^{2N+1}
\left |\mu \right |^{2N+1}Z\left (u_{0}+\left |\lambda \right |^{2N+1}
\right )Z\left (u_{0}+\left |\mu \right |^{2N+1}\right )d|\Gamma _{y}(
\lambda ,\mu )|<\infty .
\end{equation}
Then
%
\begin{equation}
\label{solution}
U(t,x)=\int _{-\infty }^{\infty }I(t,x,\lambda )\, dy(\lambda )
\end{equation}
is the classical solution to the problem
(\ref{linear_eqn})--(\ref{random_ini_cond}), that is, $U(t,x)$ satisfies
\xch{Equation}{equation} (\ref{linear_eqn}) with probability one and $U(0,x)=\eta (x)$.
Here
%
\begin{equation}
\label{Itxlm}
I(t,x,\lambda )=\kappa \left (\lambda x +t\sum _{k=1}^{N} a_{k}
\lambda ^{2k+1}(-1)^{k} \right ),
\end{equation}
and $\kappa (v) = \cos v$ or $\kappa (v) = \sin v$ for the cases when
$\eta (x) = \int  _{\mathbb{R}}cos (xu)\,dy(u)$ or $\eta (x) =
\int  _{\mathbb{R}}\sin (xu) \,dy(u)$ respectively.
\end{thm}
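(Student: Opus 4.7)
The plan is to verify the theorem in three stages: first a pointwise identity for the integrand, then mean-square justification of interchanging derivatives and the stochastic integral, and finally an almost-sure upgrade using the admissibility of $Z$.

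First I would check the initial condition, which is immediate: at $t=0$ the kernel reduces to $I(0,x,\lambda)=\kappa(\lambda x)$, so \eqref{solution} collapses to the spectral representation of $\eta(x)$ from Definition~\ref{real_harmonizable}. Next, treating $\lambda$ as a parameter, I would differentiate the deterministic kernel $I(t,x,\lambda)=\kappa\bigl(\lambda x+tS(\lambda)\bigr)$ with $S(\lambda)=\sum_{k=1}^{N}a_{k}\lambda^{2k+1}(-1)^{k}$. Each $x$-derivative produces a factor $\lambda$ and rotates $\kappa$, giving $\partial_{x}^{2k+1}I=\lambda^{2k+1}\kappa^{(2k+1)}(\cdot)=(-1)^{k+1}\lambda^{2k+1}\kappa^{*}(\cdot)$, where $\kappa^{*}$ is $\sin$ when $\kappa=\cos$ and $-\cos$ otherwise; meanwhile $\partial_{t}I=S(\lambda)\kappa'(\cdot)=-S(\lambda)\kappa^{*}(\cdot)$. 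Summing against $a_{k}$ turns the $x$-side into $-\kappa^{*}(\cdot)\sum_{k}a_{k}\lambda^{2k+1}(-1)^{k}=-S(\lambda)\kappa^{*}(\cdot)$, exactly matching $\partial_{t}I$. Hence the integrand solves \eqref{linear_eqn} pointwise in $\lambda$.

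The second step is to justify that these formal derivatives may be pulled inside the stochastic integral. Because the derivatives of $I$ are uniformly bounded in $(t,x)$ by $|\lambda|^{2k+1}$ (times constants) or by $|\lambda|^{2N+1}$ for the highest-order term, and because the variance of $\int K(\lambda)\,dy(\lambda)$ is controlled by $\int\!\int K(\lambda)\overline{K(\mu)}\,d\Gamma_{y}(\lambda,\mu)$, the mean-square existence of $\partial_{x}^{2k+1}U$ and $\partial_{t}U$ is reduced to the finiteness of iterated integrals of the type $\int\!\int|\lambda|^{2k+1}|\mu|^{2k+1}\,d|\Gamma_{y}(\lambda,\mu)|$, which is dominated by the hypothesis \eqref{exists_classic} (the factor $Z(u_{0}+|\lambda|^{2N+1})\ge Z(u_{0})>0$ is harmless, and dropping $Z$ gives an upper bound). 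Standard approximation of the difference quotients by Riemann–Stieltjes sums then shows that the mean-square partial derivatives coincide with $\int_{\mathbb{R}}\partial^{\alpha}I(t,x,\lambda)\,dy(\lambda)$ for every $(t,x)$.

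The main obstacle — and the reason the admissible function $Z$ is in the hypothesis rather than a mere polynomial weight — is promoting these a.e.-defined mean-square derivatives to genuine pathwise derivatives that satisfy \eqref{linear_eqn} for almost every $\omega$ simultaneously in $(t,x)$. My plan is to identify the difference quotients (and the candidate derivatives) as strictly $\varphi$-sub-Gaussian processes via Example~\ref{ex_kernel}, and then apply Lemma~\ref{lmm12} to bound sines of the type $\sin\bigl(\tfrac{1}{2}(\Delta t)\,\lambda^{2k+1}\bigr)$ and $\sin\bigl(\tfrac{1}{2}(\Delta x)\lambda\bigr)$ uniformly by ratios of the form $Z(u_{0}+|\lambda|^{2N+1})/Z(u_{0}+1/|\Delta t|)$ etc. This converts the integrand of the $\varphi$-sub-Gaussian norm of an increment into a quantity that (a)~is integrable in $(\lambda,\mu)$ by virtue of \eqref{exists_classic}, and (b)~tends to zero with $|\Delta t|,|\Delta x|$ at the rate dictated by $Z$. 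The admissibility integral for $Z$ then feeds into a sample-path continuity argument (Theorem~\ref{Theorem5_KO2016} or the corresponding continuity criterion for $\varphi$-sub-Gaussian processes) that gives a continuous modification of each derivative, after which the pointwise mean-square PDE identity lifts to a pathwise identity on a full-measure event, completing the proof.
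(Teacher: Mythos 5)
Your three-stage outline is essentially the route the paper relies on, but be aware that the paper does not prove Theorem~\ref{thm_solution} at all: it imports the result from \cite{BKOS,KozOrsSakhVas_JOSS2018}, and the only proof-related content in the text is the remark following the statement, namely that under \eqref{exists_classic} the integrals \eqref{integr} converge uniformly in probability for $|x|\le A$, $0\le t\le T$, which is exactly what legitimises term-by-term differentiation of \eqref{solution}. Your stage one is correct and complete (the identity $\kappa^{(2k+1)}=(-1)^{k}\kappa'$ makes the kernel solve \eqref{linear_eqn} pointwise in $\lambda$, and $I(0,x,\lambda)=\kappa(\lambda x)$ recovers $\eta$), and stages two and three are the right ideas, matching the cited argument: Example~\ref{ex_kernel} identifies the derivative candidates as strictly $\varphi$-sub-Gaussian, and Lemma~\ref{lmm12} applied to the sine of half-increments is precisely how the $Z(u_{0}+|\lambda|^{2N+1})$ factors in \eqref{exists_classic} arise.

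Two points in your sketch need tightening. First, the parenthetical claim that $Z(u_{0}+|\lambda|^{2N+1})\ge Z(u_{0})>0$ so that ``dropping $Z$ gives an upper bound'' fails in the typical case $Z(u)=u^{\rho}$, $u_{0}=0$, where $Z(u_{0})=0$; the correct fix is to split the domain at $|\lambda|=1$, using the finite Vitali variation of $\Gamma_{y}$ on the bounded part and $Z(u_{0}+|\lambda|^{2N+1})\ge Z(u_{0}+1)>0$ on the rest. Second, and more substantively, constructing a continuous modification of each candidate derivative process does not by itself identify it as the pathwise derivative of $U$: continuity of $V(t,x)=\int_{\mathbb{R}}\partial^{\alpha}I(t,x,\lambda)\,dy(\lambda)$ says nothing about its relation to increments of $U$ along a fixed sample path. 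You need either the integral representation $U(t,x)-U(t_{0},x)=\int_{t_{0}}^{t}V(s,x)\,ds$ almost surely simultaneously for all $t$ (obtained by a Fubini argument from the mean-square identity plus continuity), or, as in \cite{BKOS} and the paper's remark, the classical theorem that locally uniform (in probability) convergence of the differentiated truncations, together with convergence of the undifferentiated ones, forces pathwise differentiability with the expected derivative. With that step made explicit your argument closes.
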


\begin{rem}
Note that under the condition \eqref{exists_classic} all the integrals
%
\begin{equation}
\label{integr}
\int _{\mathbb{R}}\lambda ^{s}I\left ( t,x,\lambda \right ) dy\left (
\lambda \right ) ,\quad  s=0,1,2,\ldots ,2N+1,
\end{equation}
converge uniformly in probability for $|x| \leq A$ and $0 \leq t
\leq T$ for all $A>0$, $ T>0$. This guarantees that the derivatives of
orders $s=1,2,\ldots ,2N+1$ of solution $U(t,x)$ given by
\eqref{solution} exist with probability one. In this sense we can treat
$U(t,x)$ as a classical solution. We refer for more details to
\cite{BKOS}.
\end{rem}

\begin{rem}
Similar result can be stated for the case $\eta (x)=\int _{-\infty }
^{\infty }(a \sin xu + b\cos xu) \,\mathrm{d}y(u)$, where $a$ and
$b$ are some real constants.
\end{rem}

\begin{rem}
Let $\varphi  ( x ) =\frac{ | x | ^{p}}{p}$,
$p>1$, for sufficiently large $x$. Then the statement of Theorem
\ref{thm_solution} holds if the following integral converges
%
\begin{equation}
\label{phixp}
\int _{\mathbb{R}}\int _{\mathbb{R}}\left | \lambda \mu \right | ^{2N+1}
\left ( \ln \left (
1+\lambda \right ) \ln \left ( 1+\mu \right )
\right ) ^{\alpha }d|\Gamma _{y}\left ( \lambda ,\mu \right )| ,
\end{equation}
where $\alpha $ is a constant such that $\alpha >1-\frac{1}{p}$ (see
\cite{BKOS}).
\end{rem}

\section{Main results}%
\label{main}

\subsection{Some auxiliary estimates}%
\label{auxiliary}

Let us consider a separable metric space $(\mathbf{T},d)$, where
$\mathbf{T}= \{ a_{i} \leq t_{i} \leq b_{i},\, i=1, 2 \}$ and
$ d({t}, {s}) = \max  _{i=1,2} |t_{i} -s_{i}|$, ${t} =(t_{1}, t
_{2})$, ${s} =(s_{1}, s_{2})$.

\begin{thm}
\label{sup_X}
Let $ X=\{X({t}), {t} \in \mathbf{T}\}$ be a separable
$\varphi $-sub-Gaussian random process such that $\varepsilon _{0} =
\sup  _{t\in \mathbf{T}} \tau _{\varphi }(X(t)) < \infty $ and
%
\begin{equation}
\label{suptau3.16}
\sup _{\substack{
d({t},{s})\leq h,
\\
{t}, {s}\in \mathbf{T}
\\
}}\tau _{\varphi } (X ({t})-
X ({s}))\leq \sigma (h),
\end{equation}
where $\{\sigma (h),\; 0 < h \leq \max  _{i = 1,2} |b_{i} -a_{i}|
\}$ is a monotonically increasing continuous function such that
$\sigma (h)\rightarrow 0$ as $ h \rightarrow 0 $, and for $0<\varepsilon
\le \gamma _{0}$
%
\begin{equation}
\label{intPsi3.16}
\tilde{I}_{r}(\varepsilon ):=\int \limits _{0}^{\varepsilon }
r \left (\left (\frac{b
_{1}-a_{1}}{2\sigma ^{(-1)}(v)}+1\right )\left (\frac{b_{2}-a_{2}}{2
\sigma ^{(-1)}(v)}+1\right )\right )\,d v < \infty ,
\end{equation}
where $ \gamma _{0} = \sigma (\max _{i =1,2} |b_{i} -a_{i}| ) $ and
$r(x),x\ge 1$, is defined in Theorem~\ref{Theorem5_KO2016}. Then for all
$0<\theta <1$ and $u>0$ it holds
%
\begin{equation}
\label{eq3.3}
P \{\sup \limits _{t\in \mathbf{T}} |X(t)| \geq u\} \leq 2\tilde{A}(
\theta ,u),
\end{equation}
where
\begin{equation*}
\tilde{A}(\theta ,u) = \exp \Big \{- \varphi ^{*} \Big ( \frac{u (1-
\theta )}{\varepsilon _{0}} \Big ) \Big \}r^{(-1)}\left (\frac{
\tilde{I}_{r}(\min (\theta \varepsilon _{0}, \gamma _{0}))}{\theta
\varepsilon _{0}}\right ).
\end{equation*}
\end{thm}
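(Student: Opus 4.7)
The plan is to reduce the statement to a direct application of Theorem~\ref{Theorem5_KO2016} by producing an explicit estimate for the metric massiveness $N(v)$ of the pseudometric space $(\mathbf{T},\rho_X)$, using the given modulus of continuity $\sigma$ and the product structure of the parameter set $\mathbf{T}$.

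First I would translate $\rho_X$-coverings into $d$-coverings. Condition \eqref{suptau3.16} says that $\rho_X({t},{s})=\tau_\varphi(X({t})-X({s}))\le \sigma(d({t},{s}))$, so any closed $d$-ball of radius $\sigma^{(-1)}(v)$ is contained in a closed $\rho_X$-ball of radius $v$. Hence $N(v)\le N_d(\sigma^{(-1)}(v))$, where $N_d(h)$ denotes the $h$-covering number of $\mathbf{T}$ in the metric $d$. Since $d$ is the max-metric on the rectangle $[a_1,b_1]\times[a_2,b_2]$, a closed $d$-ball of radius $h$ is a square of side $2h$, and a standard grid argument shows
\begin{equation*}
N_d(h)\le \Bigl(\tfrac{b_1-a_1}{2h}+1\Bigr)\Bigl(\tfrac{b_2-a_2}{2h}+1\Bigr).
\end{equation*}
Choosing $h=\sigma^{(-1)}(v)$ yields the pointwise bound on $N(v)$ that appears inside the integrand of $\tilde I_r$.

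Next I would insert this bound into the entropy integral of Theorem~\ref{Theorem5_KO2016}. Because $r$ is nonnegative and nondecreasing, the integrand in $I_r(v)$ is dominated by the integrand in $\tilde I_r(v)$ on the range $0<v\le\gamma_0$, so $I_r(\theta\varepsilon_0)\le \tilde I_r(\min(\theta\varepsilon_0,\gamma_0))$. If $\theta\varepsilon_0\le\gamma_0$, this is immediate. If $\theta\varepsilon_0>\gamma_0$, then for $v>\gamma_0$ the diameter of $(\mathbf{T},\rho_X)$ is at most $\gamma_0$, so $N(v)=1$ and this tail of the integral contributes nothing (using the convention, inherent in \eqref{intPsi3.16}, that the factor $\tilde N(v)$ is taken to be $1$ once $\sigma^{(-1)}(v)\ge \max_i|b_i-a_i|$). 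The separability assumption, together with $\sigma(h)\to 0$ as $h\to 0$, ensures $X$ is separable on $(\mathbf{T},\rho_X)$ and that this pseudometric space itself is separable, so Theorem~\ref{Theorem5_KO2016} is applicable.

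Plugging the estimate $I_r(\theta\varepsilon_0)\le \tilde I_r(\min(\theta\varepsilon_0,\gamma_0))$ into the expression for $A(\theta,u)$ from Theorem~\ref{Theorem5_KO2016}, and using the monotonicity of $r^{(-1)}$, I obtain exactly $\tilde A(\theta,u)$, and therefore \eqref{eq3.3}. The only delicate point is the interplay between $\theta\varepsilon_0$ (the natural upper limit in Theorem~\ref{Theorem5_KO2016}) and $\gamma_0$ (the natural upper limit for $\tilde I_r$ dictated by the domain of $\sigma^{(-1)}$); once the observation about the diameter of $(\mathbf{T},\rho_X)$ is made, this is a cosmetic issue rather than a genuine obstacle, and the remainder of the argument is a clean substitution into the already-established general bound.
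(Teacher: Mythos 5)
Your proposal is correct and follows essentially the same route as the paper: both reduce the statement to Theorem~\ref{Theorem5_KO2016} by bounding the massiveness $N(v)$ of $(\mathbf{T},\rho_X)$ via the $d$-covering number at scale $\sigma^{(-1)}(v)$ (the grid estimate $\bigl(\frac{b_1-a_1}{2\sigma^{(-1)}(v)}+1\bigr)\bigl(\frac{b_2-a_2}{2\sigma^{(-1)}(v)}+1\bigr)$) and then comparing $I_r$ with $\tilde{I}_r$. Your explicit treatment of the case $\theta\varepsilon_0>\gamma_0$, where $N(v)=1$, is in fact slightly more careful than the paper's, which passes to $I_r(\min(\theta\varepsilon_0,\gamma_0))$ without comment, but the substance is identical.
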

\begin{proof}
To prove this theorem, we apply Theorem~\ref{Theorem5_KO2016} in the
case of the separable metric space $(\mathbf{T},d)$ with $T = \{ a
_{i} \leq t_{i} \leq b_{i},\, i=1, 2 \}$ and $ d({t}, {s}) = \max
 _{i=1,2} |t_{i} -s_{i}|$,\break  ${t} =(t_{1}, t_{2})$, ${s} =(s_{1},
s_{2})$.

In particular, condition~(\ref{suptau3.16}) means that
\begin{equation*}
\sup _{\substack{
d({t},{s})\leq h,
\\
{t}, {s}\in \mathbf{T}
\\
}}\tau _{\varphi } (X ({t})-
X ({s}))=
\sup _{\substack{
d({t},{s})\leq h,
\\
{t}, {s}\in \mathbf{T}
\\
}}\rho _{X}(t,s)\leq \sigma (h),\quad  0 < h \leq \max \limits _{i = 1,2} |b
_{i} -a_{i}|.
\end{equation*}
From the fact that the process $X$ is separable on $(\mathbf{T}, \rho
_{X})$ and the function $\{\sigma (h),\; 0 < h \leq \max  _{i =
1,2} |b_{i} -a_{i}|\}$ is a monotonically increasing continuous
function, we get that for $\varepsilon \le \gamma _{0}$ the smallest
number of elements in an $\varepsilon $-covering of the pseudometric
space $(\mathbf{T}, \rho _{X})$\index{pseudometric space} can be estimated as the smallest number
of elements in a $(\sigma ^{(-1)}(\varepsilon ))$-covering of the metric
space $(\mathbf{T}, d)$ as follows:
\begin{equation*}
N(\varepsilon ) \leq \Big [ \Big ( \frac{b_{1} -a_{1}}{2 \sigma ^{(-1)} (
\varepsilon )}
+ 1\Big ) \Big ( \frac{b_{2} -a_{2}}{2 \sigma ^{(-1)} (
\varepsilon )}+ 1\Big ) \Big ]
\quad \text{and} \quad N(\varepsilon ) =
1 \quad \text{if}\ \varepsilon > \gamma _{0}.
\end{equation*}

Hence, from condition~(\ref{intPsi3.16}) we get that
\begin{equation*}
I_{r}(\varepsilon )=\int \limits _{0}^{\varepsilon }
r (N(v))\,d v
\le \tilde{I}_{r}(\varepsilon )<\infty ,\quad \varepsilon \le \gamma
_{0},
\end{equation*}
that is, conditions of Theorem~\ref{Theorem5_KO2016} are satified for
the process $X$. Finally, taking into account the estimates above and
the properties of the function $r$ we derive the estimate for
distribution of supremum of the process $X$ for all $0<\theta <1$ and
$u>0$:
\begin{gather*}
P \{\sup \limits _{t\in \mathbf{T}} |X(t)| \geq u\} \leq 2\exp \Big \{-
\varphi ^{*} \Big ( \frac{u (1-\theta )}{\varepsilon _{0}} \Big ) \Big \}r
^{(-1)}\left (\frac{I_{r}(\min (\theta \varepsilon _{0}, \gamma _{0}))}{
\theta \varepsilon _{0}}\right )
\\
\leq 2\exp \Big \{- \varphi ^{*} \Big (
\frac{u (1-\theta )}{\varepsilon _{0}} \Big ) \Big \}r^{(-1)}\left (\frac{
\tilde{I}_{r}(\min (\theta \varepsilon _{0}, \gamma _{0}))}{\theta
\varepsilon _{0}}\right ) = 2\tilde{A}(\theta ,u).\qedhere
\end{gather*}
\end{proof}

As can be seen from Theorem \ref{sup_X}, it is crucial to guarantee some
kind of continuity of $X$ on $T$ in the form \eqref{suptau3.16}, that
is, with respect to the norm $\tau _{\varphi }$ induced by the process
$X$ itself. Fulfilment of condition \eqref{suptau3.16} enables us to
write down the upper bound \eqref{eq3.3} for the distribution of
supremum of a $\varphi $-sub-Gaussian process $ X=\{X({t}), {t}
\in \mathbf{T}\}$ defined on a separable metric space~$(\mathbf{T}, d)$.

Below we present as a separate theorem the very useful result giving the
conditions for the estimate \eqref{suptau3.16} to hold for the field
$\{U(t,x),\,a \leq t \leq b,\, c \leq x \leq d\}$ representing the
solution to (\ref{linear_eqn})--(\ref{random_ini_cond}).

\begin{thm}
\label{thm_sigma}
Let $y=\{y(u),\, u \in \mathbb{R}\}$ be a strictly
$\varphi $-sub-Gaussian random process with a determining constant
$C_{y}$ and $ U(t, x) = \int  _{-\infty }^{\infty }I(t, x,
\lambda )\,dy(\lambda )$, where $I(t, x, \lambda )$ is given in
Theorem~\ref{thm_solution}, $a \leq t \leq b$, $c \leq x \leq d$. Assume
that $U(t, x)$ exists and is continuous with probability one (this
condition holds if Theorem~\ref{thm_solution} holds). Let
$\mathsf{E}y(t) y(s) = \Gamma _{y} (s,t)$. Assume that $\{Z(u),u\ge 0
\}$ is an admissible\index{admissible function} function for the space $\mathrm{Sub}_{\varphi }(
\Omega )$. If the integral
%
\begin{align}
\label{CZ}
C_{Z}^{2} &= \int _{-\infty }^{\infty }\int _{-\infty }^{\infty }\left (Z
\Big (\frac{|\lambda |}{2}+u_{0}\Big ) + Z \Big ( \frac{1}{2}
\Big |
\sum _{k=1}^{N} a_{k} \lambda ^{2k+1} (-1)^{k}\Big | + u_{0} \Big )\right )
\nonumber
\\&
\times \left (Z \Big (\frac{|\mu |}{2}+u_{0}\Big ) + Z \Big (
\frac{1}{2}
\,\Big | \sum _{k=1}^{N} a_{k} \mu ^{2k+1} (-1)^{k}\Big | + u
_{0} \Big )\right )\,d|\Gamma _{y}(\lambda , \mu )|
\end{align}
converges, then there exists the function
%
\begin{equation}
\label{sigma}
\sigma (h ) = 2 C_{y} C_{Z} \Big ( Z\Big ( \frac{1}{h} +u_{0} \Big )
\Big )^{-1},\quad 0<h<\max (b-a,d-c),
\end{equation}
such that
%
\begin{equation}
\label{cond_sigma}
\sup _{ \substack{t,t_{1}\in [a,b]:|t- t_{1}| \leq h
\\
x,x_{1}\in [c,d]:|x-x_{1}| \leq h
\\
}}
\tau _{\varphi }( U(t, x) - U(t_{1}, x_{1})) \le \sigma (h).
\end{equation}
\end{thm}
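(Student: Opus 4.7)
The plan is to reduce the $\tau_\varphi$-bound to an $L^2$-bound via the strictly $\varphi$-sub-Gaussian property, and then estimate the $L^2$-norm by pointwise control of the kernel increment $I(t,x,\lambda)-I(t_1,x_1,\lambda)$ through the admissible function $Z$.

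\textbf{Step 1 (reduction to $L^2$).} Since $U(t,x) = \int_{\mathbb{R}} I(t,x,\lambda)\,dy(\lambda)$ with $y$ strictly $\varphi$-sub-Gaussian (constant $C_y$), the linear closure result after Definition of strictly $\varphi$-sub-Gaussian families yields
\begin{equation*}
\tau_\varphi\bigl(U(t,x)-U(t_1,x_1)\bigr) \le C_y\,\bigl(\mathsf{E}(U(t,x)-U(t_1,x_1))^2\bigr)^{1/2}.
\end{equation*}
Expanding the $L^2$-norm using the bimeasure $\Gamma_y$, I get
\begin{equation*}
\mathsf{E}(U(t,x)-U(t_1,x_1))^2 \le \int_{\mathbb{R}}\int_{\mathbb{R}} \Delta(\lambda)\,\Delta(\mu)\,d|\Gamma_y(\lambda,\mu)|,
\end{equation*}
where $\Delta(\lambda):=|I(t,x,\lambda)-I(t_1,x_1,\lambda)|$.

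\textbf{Step 2 (pointwise bound on $\Delta$).} Using the elementary identities $\cos A-\cos B=-2\sin\frac{A+B}{2}\sin\frac{A-B}{2}$ and $\sin A-\sin B=2\cos\frac{A+B}{2}\sin\frac{A-B}{2}$, in either case for $\kappa$,
\begin{equation*}
\Delta(\lambda) \le 2\left|\sin\!\left(\tfrac{\lambda(x-x_1)}{2}+\tfrac{t-t_1}{2}\sum_{k=1}^N a_k\lambda^{2k+1}(-1)^k\right)\right|.
\end{equation*}
Applying $|\sin(\alpha+\beta)|\le|\sin\alpha|+|\sin\beta|$ splits this into two sines. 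For the first, writing $\lambda(x-x_1)/2=(\lambda/2)/(1/(x-x_1))$ and invoking Lemma~\ref{lmm12} (with the role of $v$ played by $1/(x-x_1)$, where $|1/(x-x_1)|\ge 1/h$ since $|x-x_1|\le h$) together with monotonicity of $Z$,
\begin{equation*}
\left|\sin\!\tfrac{\lambda(x-x_1)}{2}\right| \le \frac{Z(|\lambda|/2+u_0)}{Z(1/h+u_0)}.
\end{equation*}
The same argument applied to the second sine (with $u=\frac{1}{2}\sum_k a_k\lambda^{2k+1}(-1)^k$ and $v=1/(t-t_1)$) gives an analogous bound. The cases $x=x_1$ or $t=t_1$ are trivial since the corresponding sine vanishes. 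Combining,
\begin{equation*}
\Delta(\lambda)\le \frac{2}{Z(1/h+u_0)}\left[Z\!\Big(\tfrac{|\lambda|}{2}+u_0\Big)+Z\!\Big(\tfrac{1}{2}\Big|\sum_{k=1}^N a_k\lambda^{2k+1}(-1)^k\Big|+u_0\Big)\right].
\end{equation*}

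\textbf{Step 3 (assembly).} Multiplying the bounds for $\Delta(\lambda)$ and $\Delta(\mu)$ and integrating against $d|\Gamma_y(\lambda,\mu)|$ recognises the integrand of $C_Z^2$, giving
\begin{equation*}
\mathsf{E}(U(t,x)-U(t_1,x_1))^2 \le \frac{4\,C_Z^2}{[Z(1/h+u_0)]^2}.
\end{equation*}
Plugging this into Step 1 yields exactly $\tau_\varphi(U(t,x)-U(t_1,x_1))\le 2C_yC_Z [Z(1/h+u_0)]^{-1}=\sigma(h)$, as required.

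\textbf{Main obstacle.} The technically delicate point is the correct application of Lemma~\ref{lmm12}: the lemma is stated for $|\sin(u/v)|$ with $u,v\neq 0$, so I have to write each of the two sines in this ratio form with $v$ equal to $1/(x-x_1)$ or $1/(t-t_1)$, handle separately the degenerate cases in which these quantities vanish, and use the monotonicity of $Z$ to replace $|v|+u_0$ by $1/h+u_0$. The rest (the trigonometric splitting, the strictly $\varphi$-sub-Gaussian passage from $\tau_\varphi$ to $L^2$, and the recognition of $C_Z^2$) is routine.
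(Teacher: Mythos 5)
Your proposal is correct and follows essentially the same route as the paper's proof in Appendix~A: reduction of $\tau_\varphi$ to the $L^2$-norm via the determining constant $C_y$, the product-to-sum trigonometric identity followed by $|\sin(C+D)|\le|\sin C|+|\sin D|$, the application of Lemma~\ref{lmm12} with $v=1/(x-x_1)$ (resp.\ $1/(t-t_1)$) and monotonicity of $Z$, and finally recognition of $C_Z^2$. Your explicit treatment of the degenerate cases $x=x_1$ or $t=t_1$ is a minor point the paper leaves implicit.
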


This result was obtained in the paper~\cite{KozOrsSakhVas_JOSS2018}
as an intermediate statement in the course of the proof of Theorem 5.1.
To make the present paper self-contained, we present in Appendix the
main steps of the proof.

\subsection{On the distribution of supremum of solution to the
problem (\ref{linear_eqn})--(\ref{random_ini_cond})}

Now we have all the necessary tools to derive the estimate for the
distribution of supremum of the field $U(t,x)$ representing the solution
to (\ref{linear_eqn})--(\ref{random_ini_cond}).

\begin{thm}
\label{thm_sup_solution}
Let $y=\{y(u),\, u \in \mathbb{R}\}$ be a strictly
$\varphi $-sub-Gaussian random process with a determining constant
$C_{y}$ and $ U(t, x) = \int  _{-\infty }^{\infty }I(t, x,
\lambda )\,dy(\lambda )$, where $I(t, x, \lambda )$ is given in
Theorem~\ref{thm_solution}, $a \leq t \leq b$, $c \leq x \leq d$. Assume
that for $U(t, x)$ the conditions of Theorem~\ref{thm_sigma} hold. Let
$r(x)$, $x\ge 1$, be a non-negative, monotone increasing function such
that the function $r(e^{x}),x\ge 0$, is convex, and condition
\eqref{intPsi3.16} is satisfied for $\sigma (h)$ given by
\eqref{sigma}.

Then for all $0<\theta <1$ and $ u > 0$ the following inequality holds
true
%
\begin{equation}
\label{P_sup_U}
P \big \{ \sup _{\substack{a \le t \le b
\\
c \le x \le d}}
|U(t, x)| > u \big \} \leq 2\hat{A}(\theta ,u),
\end{equation}
where
%
\begin{align}
\hat{A}(\theta ,u) &= \exp \Big \{- \varphi ^{*} \Big ( \frac{u (1-
\theta )}{\varepsilon _{0}} \Big ) \Big \}r^{(-1)}\left (\frac{\hat{I}
_{r}(\min (\theta \varepsilon _{0}, \gamma _{0}))}{\theta \varepsilon
_{0}}\right ),\\
\hat{I}_{r} (\delta )
& = \int _{0}^{\delta }r \left ( \left [ \Big (
\frac{b-a}{2} \Big ( Z^{(-1)}
\Big ( \frac{2 C_{Z} C_{y}}{s} \Big ) - u
_{0} \Big ) +1 \Big ) \right .\right .
\nonumber
\\
& \left .\left .\times \Big ( \frac{d-c}{2} \Big ( Z^{(-1)} \Big ( \frac{2
C_{Z}
C_{y}}{s} \Big ) -u_{0} \Big ) +1\Big )\right ] \right ) \,ds,\\
\varepsilon _{0} &=\sup _{\substack{a \le t \le b
\\
c \le x \le d}}\tau _{\varphi }(U(t, x)),\nonumber
\\
\gamma _{0} &= \frac{2 C_{y} C_{Z}}{Z( \frac{1}{\varkappa } +u_{0})},
\quad \varkappa = \max (b-a, d-c).\nonumber
\end{align}%
\end{thm}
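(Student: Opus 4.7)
The plan is to combine the two auxiliary theorems stated just before: Theorem \ref{thm_sigma} supplies a modulus of continuity of $U(t,x)$ in the $\tau_\varphi$-norm, and Theorem \ref{sup_X} turns such a modulus into a distributional bound for the supremum over a rectangle. The rectangle $\mathbf{T}=[a,b]\times[c,d]$ equipped with $d((t,x),(t_1,x_1))=\max(|t-t_1|,|x-x_1|)$ fits precisely the framework of Theorem \ref{sup_X} with $(a_1,b_1)=(a,b)$ and $(a_2,b_2)=(c,d)$, so the main task is simply to translate the input data into the form required there.

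First I would invoke Theorem \ref{thm_sigma}: since by hypothesis its assumptions hold, there exists the function
\[
\sigma(h)=\frac{2C_yC_Z}{Z(1/h+u_0)},\qquad 0<h<\varkappa,
\]
satisfying condition \eqref{cond_sigma}, i.e.\ \eqref{suptau3.16} for the process $U$ on $\mathbf{T}$. Because $Z$ is continuous and strictly increasing (so $Z^{(-1)}$ is well defined) and $\sigma$ inherits monotonicity and continuity with $\sigma(h)\to 0$ as $h\to 0$, I can explicitly compute its inverse by solving $v=2C_yC_Z/Z(1/h+u_0)$ for $h$, obtaining
\[
\frac{1}{\sigma^{(-1)}(v)}=Z^{(-1)}\!\left(\frac{2C_ZC_y}{v}\right)-u_0.
\]
Also $\gamma_0=\sigma(\varkappa)=2C_yC_Z/Z(1/\varkappa+u_0)$, matching the $\gamma_0$ stated in the theorem.

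Next I would apply Theorem \ref{sup_X} to the separable $\varphi$-sub-Gaussian field $U$ on $(\mathbf{T},d)$. The separability of $(\mathbf{T},d)$ is immediate, and separability of $U$ on $(\mathbf{T},\rho_U)$ follows from continuity with probability one (guaranteed by Theorem \ref{thm_solution}) together with separability in $d$ and the control \eqref{cond_sigma}. Substituting the explicit formula for $1/\sigma^{(-1)}(v)$ into the entropy integral $\tilde I_r$ in \eqref{intPsi3.16} transforms it precisely into $\hat I_r$ as defined in the theorem statement. Finiteness of $\hat I_r(\delta)$ on the relevant range is exactly the hypothesis \eqref{intPsi3.16} imposed on $\sigma$. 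The conclusion \eqref{eq3.3} of Theorem \ref{sup_X} then becomes \eqref{P_sup_U} verbatim, with $\tilde A(\theta,u)$ becoming $\hat A(\theta,u)$.

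There is no real analytic obstacle here, since the deep work (the entropy bound of Theorem \ref{Theorem5_KO2016}, the reduction to a metric space in Theorem \ref{sup_X}, and the estimation of $\tau_\varphi(U(t,x)-U(t_1,x_1))$ via the harmonizable representation in Theorem \ref{thm_sigma}) has already been carried out. The only points requiring a bit of care are verifying that $\sigma$ given by \eqref{sigma} is indeed monotonically increasing and continuous with $\sigma(h)\to 0$ as $h\to 0$ (this follows from the admissibility properties of $Z$ from Lemma \ref{lmm12}, namely monotonicity of $Z$ and $u/Z(u)$), and the algebraic substitution that converts $\tilde I_r$ into $\hat I_r$. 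Thus the proof reduces to a clean assembly of Theorems \ref{thm_sigma} and \ref{sup_X}.
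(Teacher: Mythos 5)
Your proposal is correct and follows essentially the same route as the paper: the published proof likewise invokes Theorem \ref{thm_sigma} to obtain $\sigma(h)=2C_yC_Z\big(Z(\tfrac{1}{h}+u_0)\big)^{-1}$, computes the same inverse $\sigma^{(-1)}(v)=\big(Z^{(-1)}(\tfrac{2C_yC_Z}{v})-u_0\big)^{-1}$ on $(0,\gamma_0)$, and then feeds this into Theorem \ref{sup_X} so that $\tilde I_r$ becomes $\hat I_r$ and $\tilde A$ becomes $\hat A$. No gaps.
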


\begin{proof}
The assertion of this theorem follows from Theorems~\ref{sup_X} and
\ref{thm_sigma}. Since the conditions of Theorem~\ref{thm_sigma} are
satisfied, then there exists the function $ \sigma (h ) = 2 C_{y} C
_{Z}  \bigl( Z \bigl( \frac{1}{h} +u_{0}  \bigr)  \bigr)^{-1}, 0<h<
\max (b-a,d-c) $, such that
\begin{equation*}
\sup _{ \substack{t,t_{1}\in [a,b]:|t- t_{1}| \leq h
\\
x,x_{1}\in [c,d]:|x-x_{1}| \leq h
\\
}}
\tau _{\varphi }( U(t, x) - U(t_{1}, x_{1})) \le \sigma (h).
\end{equation*}
In this case,
\begin{equation*}
\sigma ^{(-1)} (v) = \Big ( Z^{(-1)} \Big ( \frac{2 C_{y} C_{Z}}{v}
\Big ) - u_{0} \Big )^{-1},
\quad 0 < v < \frac{2 C_{y} C_{Z}}{Z (\frac{1}{
\varkappa } + u_{0})} =
\gamma _{0},
\end{equation*}
and for $\varepsilon _{0}$ the upper bound \eqref{A.1} holds (see Appendix A).

Since the conditions \eqref{suptau3.16} and \eqref{intPsi3.16} of
Theorem \ref{sup_X} also hold true, the final estimate directly follows.
\end{proof}

\begin{rem}
The derivation of our main result is based on Theorem
\ref{Theorem5_KO2016}, and due to this we present the bounds for the distribution of the
supremum of the process $U(t,x)$ in the form different than those
obtained in the paper \cite{KozOrsSakhVas_JOSS2018}. This form of
bounds can be more useful in particular situations giving the
possibility to calculate the explicit expressions for the bounds.
\end{rem}

Now we will specify the statement of Theorem \ref{thm_sup_solution} for
particular choices of the admissible function $Z$ and the function
$\varphi $.

\begin{ex}
\label{ex1}
Consider $\varphi (x)=\frac{|x|^{\alpha }}{\alpha }$, $1 <\alpha
\leq 2$. Then $\varphi ^{*}(x)=\frac{|x|^{\gamma }}{\gamma }$, where
$\gamma \ge 2$, and $\frac{1}{\alpha }+\frac{1}{\gamma }=1$. Hence,
\begin{equation*}
\hat{A}(\theta ,u) = \exp \Big \{- \frac{u^{\gamma }(1-\theta )^{
\gamma }}{\gamma \varepsilon _{0}^{\gamma }} \Big \}r^{(-1)}\left (\frac{
\hat{I}_{r}(\min (\theta \varepsilon _{0}, \gamma _{0}))}{\theta
\varepsilon _{0}}\right ).
\end{equation*}

Now it is necessary to make the following steps:
\begin{enumerate}%
\item
to check the fulfilment of \eqref{CZ} with a particular function $Z$,
admissible for a given $\varphi $;
\item
to calculate $\sigma (h)$ in \eqref{sigma};\vadjust{\goodbreak}
\item
to choose the function $r$ satisfying \eqref{intPsi3.16};
\item
to derive an estimate for $\hat{A}(\theta ,u)$.
\end{enumerate}

So, let us choose the admissible function $Z(u)=|u|^{\rho }$,
$0 <\rho \le 1$.

In this case $ u_{0} = 0$, $Z^{(-1)} (u) = u^{\frac{1}{\rho }}$,
$u > 0$, and
%
\begin{align}
C_{Z}^{2}
& = \int _{-\infty }^{\infty }\int _{-\infty }^{\infty }
\Big (
\Big (\frac{|\lambda |}{2} \Big )^{\rho }+ \Big | \frac{1}{2}
\sum _{k=1}^{N} a_{k}
\lambda ^{2k+1} (-1)^{k} \Big |^{\rho }\Big )
\label{CZsqrt}
\\
& \times \Big ( \Big (\frac{|\mu |}{2} \Big )^{\rho }+ \Big |
\frac{1}{2} \sum _{k=1}^{N} a_{k}
\mu ^{2k+1} (-1)^{k} \Big |^{\rho }
\Big ) \, d|\Gamma _{y} (\lambda , \mu )|
\nonumber
\\
& \leq \int _{-\infty }^{\infty }\int _{-\infty }^{\infty }\frac{1}{2^{2
\rho }}
\Big (|\lambda |^{\rho }+ \Big ( \sum _{k=1}^{N} |a_{k}|
|\lambda
|^{2k+1} \Big )^{\rho }\Big )
\nonumber
\\
& \times \Big (|\mu |^{\rho }+ \Big ( \sum _{k=1}^{N} |a_{k}|
|\mu |^{2k+1} \Big )^{\rho }\Big ) \, d|\Gamma _{y} (\lambda , \mu )|.
\label{CZ_}
\end{align}
This integral converges if the following integral converges
%
\begin{equation}
\label{int_exx2}
\int _{-\infty }^{\infty }\int _{-\infty }^{\infty }|\lambda \mu |^{(2N+1)
\rho }
\, d|\Gamma _{y} (\lambda , \mu )| < \infty .
\end{equation}

Note that for the existence of the solution $U(t, x)$ we have to impose
the condition \eqref{exists_classic}, which for the admissible function\index{admissible function}
$Z(u)=|u|^{\rho }$ takes the form
%
\begin{equation}
\label{exists_classic_1}
\int _{-\infty }^{\infty }\int _{-\infty }^{\infty }\left |\lambda \right |
^{(2N+1)(\rho +1)}
\left |\mu \right |^{(2N+1)(\rho +1)}\, d|\Gamma
_{y} (\lambda , \mu )| <\infty ,
\end{equation}
and implies the fulfilment of \eqref{int_exx2}. Therefore, $C_{Z}^{2}$
is well defined.

If \eqref{exists_classic_1} holds true, then we can define $\sigma (h)$
by means of the formula \eqref{sigma}, and, for our choice of $Z$, we
have that
%
\begin{equation}
\label{sigma_rho}
\sigma (h) = 2 C_{y} C_{Z} h^{\rho }= C h^{\rho },
\quad
0<\rho \le 1,
\end{equation}
where we have denoted $C=2 C_{y} C_{Z}$.

Therefore, in view of Theorem \ref{thm_sigma}, condition
\eqref{cond_sigma} holds with $\sigma (h)$ of the form
\eqref{sigma_rho}, that is, we have the H\"{o}lder continuity of sample
paths of the solution $U(t,x)$.

Let $r(v)=v^{\beta }-1$, $0<\beta <\rho /2$, then $r^{(-1)}(v)=(v+1)^{1/
\beta }$. For such $r(v)$ and the above choice of $Z$ we have
%
\begin{align}
\label{hat_I_r_ex1}
\hat{I}_{r} (\delta )
& = \int _{0}^{\delta }r \left ( \left [ \Big (
\frac{b-a}{2} \Big ( Z^{(-1)}
\Big ( \frac{2 C_{Z} C_{y}}{s} \Big ) - u
_{0} \Big ) +1 \Big ) \right .\right .
\nonumber
\\
& \left .\left .\times \Big ( \frac{d-c}{2} \Big ( Z^{(-1)} \Big ( \frac{2
C_{Z}
C_{y}}{s} \Big ) -u_{0} \Big ) +1\Big )\right ] \right ) \,ds
\nonumber
\\
&\le \int _{0}^{\delta }\left (\left (\frac{\varkappa }{2}\frac{(2 C
_{Z} C_{y})^{1/\rho }}{s^{1/\rho }}+1\right )^{2\beta }-1\right )\,ds.
\end{align}
Consider $\delta \in (0, \theta \varepsilon _{0}]$ and let us choose
$\theta $ such that $ \frac{\varkappa }{2}\big (\frac{2 C_{Z} C_{y}}{
\theta \varepsilon _{0}}\big )^{1/\rho }>1$, that is, $\theta \in (0,
\frac{2 C_{Z} C_{y}}{\varepsilon _{0}}\big (\frac{\varkappa }{2}\big )^{
\rho })$. Then we can write the following estimate:
\begin{eqnarray*}
\hat{I}_{r} (\delta )
&\le & \int _{0}^{\delta }\left (\Big (\varkappa
\Big (\frac{2 C_{Z} C_{y}}{s}\Big )^{1/\rho }\Big )^{2\beta }-1\right )
\,ds
\\
&=&{(2 C_{Z} C_{y})^{2\beta /\rho }\varkappa ^{2\beta }}\left (1-\frac{2
\beta }{\rho }\right )^{-1}\delta ^{1-2\beta /\rho }-\delta .
\end{eqnarray*}
Suppose that $\theta \varepsilon _{0}<\gamma _{0}$, then
\begin{equation*}
r^{(-1)}\left (\frac{\hat{I}_{r}(\min (\theta \varepsilon _{0}, \gamma
_{0}))}{\theta \varepsilon _{0}}\right )\le {(2 C_{Z} C_{y})^{2/\rho }
\varkappa ^{2}}\left (1-\frac{2\beta }{\rho }\right )^{-1/\beta }(
\theta \varepsilon _{0})^{-2/\rho },
\end{equation*}
and
\begin{equation*}
\hat{A}(\theta ,u) \le \exp \Big \{- \frac{u^{\gamma }(1-\theta )^{
\gamma }}{\gamma \varepsilon _{0}^{\gamma }}\Big \}
{(2 C_{Z} C_{y})^{2/
\rho }\varkappa ^{2}}\left (1-\frac{2\beta }{\rho }\right )^{-1/\beta
}(\theta \varepsilon _{0}\xch{)^{-2/\rho }.}{)^{-2/\rho },}
\end{equation*}
If $\beta \to 0$, then $ \bigl(1-\frac{2\beta }{\rho } \bigr)^{-1/
\beta }\to \, e^{2/\rho }$, and we obtain
\begin{equation*}
\hat{A}(\theta ,u) \le \exp \Big \{- \frac{u^{\gamma }(1-\theta )^{
\gamma }}{\gamma \varepsilon _{0}^{\gamma }}\Big \}
{(2 e C_{Z} C_{y})^{2/
\rho }\varkappa ^{2}}(\theta \varepsilon _{0})^{-2/\rho },
\end{equation*}
for $\theta \in (0, \frac{2 C_{Z} C_{y}}{\varepsilon _{0}}\big (\frac{
\varkappa }{2}\big )^{\rho })$, and, therefore, for such $\theta $ we
obtain
%
\begin{equation}
\label{sup_simple}
P \big \{ \sup _{\substack{a \le t \le b
\\
c \le x \le d}}
|U(t, x)| > u \big \} \leq 2\exp \Big \{- \frac{u^{
\gamma }(1-\theta )^{\gamma }}{\gamma \varepsilon _{0}^{\gamma }}
\Big \}
{(2 e C_{Z} C_{y})^{2/\rho }\varkappa ^{2}}(\theta \varepsilon
_{0})^{-2/\rho }.
\end{equation}

Let $ y=\{y(u), u \in \mathbb{R}\}$ be a centered Gaussian random
process. Then $C_{y} = 1$, $ \varphi (x) = \frac{x^{2}}{2}$,
$ \varphi ^{*} (x) = \frac{x^{2}}{2}$.

Then the bound \eqref{sup_simple} takes place with $\gamma =2$, provided
that \eqref{exists_classic_1} holds.

Note that the constant $\rho \in (0, 1]$ should guarantee the
convergence of the integral \eqref{exists_classic_1}, and, therefore,
the choice of $\rho $ in \eqref{sup_simple} depends on the integrability
properties of $\Gamma _{y}$.
\end{ex}

The choice of the function $\varphi $ in Example~\ref{ex1} is reasoned
by the fact that the corresponding class of random process\index{random process} is the
natural generalization of Gaussian processes. This example is rather
simple and, at the same time, it is very illustrative and instructive.
Therefore, the derivations above are worth to be summarized as a
separate statement.

\begin{cor}
\label{thm_sup_solution_corollary}
Let $y=\{y(u),\, u \in \mathbb{R}\}$ be a real strictly
$\varphi $-sub-Gaussian random process with $\varphi (x)=\frac{|x|^{
\alpha }}{\alpha }$, $1 <\alpha \leq 2$, determining constant
$C_{y}$ and $\mathsf{E}y(t) y(s) = \Gamma _{y} (s,t)$. Let $U(t, x) =
\int  _{-\infty }^{\infty }I(t, x, \lambda )\,dy(\lambda )$, where
$I(t, x, \lambda )$ is given by \eqref{Itxlm},\vadjust{\goodbreak} $a \leq t \leq b$,
$c \leq x \leq d$, and $\varepsilon _{0} =
\sup _{\substack{a \le t \le b
\\
c \le x \le d}}\tau _{\varphi }(U(t, x))$. Further, let the constant
$\rho \in (0,1]$ be such that \eqref{exists_classic_1} holds. Then

(i) $U(t,x)$ exists, is continuous with probability one and for its
sample paths the H\"{o}lder continuity holds in the form
\begin{equation*}
\label{cond_sigma??}
\sup _{ \substack{t,t_{1}\in [a,b]:|t- t_{1}| \leq h
\\
x,x_{1}\in [c,d]:|x-x_{1}| \leq h
\\
}}
\tau _{\varphi }( U(t, x) - U(t_{1}, x_{1})) \le 2C_{Z} C_{y} h^{
\rho },
\end{equation*}
where $C_{Z}$ is defined in \eqref{CZsqrt};

(ii) for all $0<\theta <1$ such that $\theta \varepsilon _{0}<2C_{Z} C
_{y} (\varkappa /2)^{\rho }$ with $\varkappa = \max (b-a, d-c)$,
$\gamma $ such that $\frac{1}{\alpha }+\frac{1}{\gamma }=1$, and
$ u > 0$ the following inequality holds true
%
\begin{equation}
\label{P_sup_U_spec}
P \big \{ \sup _{\substack{a \le t \le b
\\
c \le x \le d}}
|U(t, x)| > u \big \} \leq 2\exp \Big \{- \frac{u^{
\gamma }(1-\theta )^{\gamma }}{\gamma \varepsilon _{0}^{\gamma }}
\Big \}
{(2 e C_{Z} C_{y})^{2/\rho }\varkappa ^{2}}(\theta \varepsilon
_{0})^{-2/\rho }.
\end{equation}

In particular, if the process $y=\{y(u),\, u \in \mathbb{R}\}$ is
Gaussian, then
%
\begin{equation}
\label{P_sup_U_gauss}
P \big \{ \sup _{\substack{a \le t \le b
\\
c \le x \le d}}
|U(t, x)| > u \big \} \leq 2\exp \Big \{- \frac{u^{2}
(1-\theta )^{2}}{2\varepsilon _{0}^{2}}\Big \}
{(2 e C_{Z})^{2/\rho }
\varkappa ^{2}}(\theta \varepsilon )^{-2/\rho }
\end{equation}
for all $0<\theta <1$ such that $\theta \Gamma <2C_{Z} (\varkappa /2)^{
\rho }$ and $ u > 0$.

\end{cor}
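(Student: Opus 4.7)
The plan is to derive the corollary as a direct specialization of Theorem \ref{thm_sup_solution} to $\varphi(x)=|x|^\alpha/\alpha$, $1<\alpha\le 2$, and admissible function $Z(u)=|u|^\rho$, $0<\rho\le 1$, following the four-step workflow sketched in Example \ref{ex1}: verify the integrability condition \eqref{CZ}, compute $\sigma(h)$, choose an appropriate $r(v)$, and extract an explicit bound for $\hat A(\theta,u)$.

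For part (i), I would first note that for $Z(u)=|u|^\rho$ one has $u_0=0$, and then dominate the integrand of $C_Z^2$ in \eqref{CZ} as in \eqref{CZ_}, reducing its convergence to that of the moment integral \eqref{int_exx2}. Since $|\lambda|^{(2N+1)\rho}\le 1+|\lambda|^{(2N+1)(\rho+1)}$, the latter is implied by the standing hypothesis \eqref{exists_classic_1}. Consequently Theorem \ref{thm_solution} applies and yields existence and a.s.\ continuity of $U(t,x)$, while Theorem \ref{thm_sigma} supplies $\sigma(h)=2C_yC_Zh^\rho$, which is precisely the H\"older estimate asserted in (i).

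For part (ii), I would apply Theorem \ref{thm_sup_solution} with the explicit choice $r(v)=v^\beta-1$ for $0<\beta<\rho/2$, for which $r^{(-1)}(v)=(v+1)^{1/\beta}$, and combine with $\varphi^*(x)=|x|^\gamma/\gamma$, $1/\alpha+1/\gamma=1$. Substituting $Z^{(-1)}(u)=u^{1/\rho}$ into $\hat I_r$ and restricting $\theta$ so that $\frac{\varkappa}{2}(2C_ZC_y/(\theta\varepsilon_0))^{1/\rho}>1$ allows replacement of each factor $(\,\cdot\,+1)$ by twice the dominating power, leaving a pure power integral that evaluates to $\mathrm{const}\cdot(\theta\varepsilon_0)^{1-2\beta/\rho}$. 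Dividing by $\theta\varepsilon_0$, applying $r^{(-1)}$ and then sending $\beta\downarrow 0$ via $(1-2\beta/\rho)^{-1/\beta}\to e^{2/\rho}$ collapses the second factor of $\hat A(\theta,u)$ to $(2eC_ZC_y)^{2/\rho}\varkappa^2(\theta\varepsilon_0)^{-2/\rho}$. Coupled with $\exp\{-\varphi^*((1-\theta)u/\varepsilon_0)\}=\exp\{-u^\gamma(1-\theta)^\gamma/(\gamma\varepsilon_0^\gamma)\}$ this yields \eqref{P_sup_U_spec}, and the Gaussian specialization \eqref{P_sup_U_gauss} is then immediate from $C_y=1$, $\varphi(x)=x^2/2$, $\gamma=2$.

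The hard part will be the limiting argument $\beta\downarrow 0$, because $r$ itself depends on $\beta$ and one cannot simply pass to the limit inside the conclusion of Theorem \ref{thm_sup_solution}. My approach is to observe that every $\beta\in(0,\rho/2)$ already gives a valid bound, so the corollary's bound is the infimum over such $\beta$; the function $(1-2\beta/\rho)^{-1/\beta}$ is monotone increasing on $(0,\rho/2)$ with limit $e^{2/\rho}$ as $\beta\downarrow 0$, so this infimum is attained in the limit and produces the claimed constant. A minor secondary check is the compatibility of the restriction on $\theta$ with the hypothesis $\theta\varepsilon_0<\gamma_0$ implicit in the computation: since $Z(1/\varkappa)=\varkappa^{-\rho}$ gives $\gamma_0=2C_yC_Z\varkappa^\rho\ge 2C_ZC_y(\varkappa/2)^\rho$, the stated restriction is the stronger one, so $\min(\theta\varepsilon_0,\gamma_0)=\theta\varepsilon_0$ automatically.
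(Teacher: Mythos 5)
Your proposal is correct and follows essentially the same route as the paper, whose proof of this corollary is precisely the derivation carried out in Example \ref{ex1} (verify \eqref{CZ} via \eqref{int_exx2} and \eqref{exists_classic_1}, take $Z(u)=|u|^{\rho}$ with $u_{0}=0$ so $\sigma(h)=2C_{y}C_{Z}h^{\rho}$, choose $r(v)=v^{\beta}-1$, and let $\beta\downarrow 0$). Your justification of the limit step as an infimum over valid $\beta$, and your observation that $\theta\varepsilon_{0}<2C_{Z}C_{y}(\varkappa/2)^{\rho}$ already forces $\theta\varepsilon_{0}<\gamma_{0}=2C_{y}C_{Z}\varkappa^{\rho}$, are slightly more careful than the paper's wording but reach the same conclusion.
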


%
\begin{ex}
\label{other_phi}
Consider $\varphi (x)=\exp \{|x|\}-|x|-1$, $x\in \mathbb{R}$. Then
$\varphi ^{*}(x)=(|x|+1)\ln (|x|+1)-|x|,\,x\in \mathbb{R}$. Hence,
\begin{eqnarray*}
\hat{A}(\theta ,u)
&=& \exp \left \{  - \left (\frac{u (1-\theta )}{
\varepsilon _{0}}+1\right )\ln \left (\frac{u (1-\theta )}{\varepsilon
_{0}}+1\right )+\frac{u (1-\theta )}{\varepsilon _{0}} \right \}
\\
&\times & r^{(-1)}\left (\frac{\hat{I}_{r}(\min (\theta \varepsilon
_{0}, \gamma _{0}))}{\theta \varepsilon _{0}}\right ).
\end{eqnarray*}
Let us take the function $ Z(u) = \ln ^{\alpha }(u+1) $, $ u \geq 0 $,
$ \alpha > 1$, as an admissible\index{admissible function} function for the space $\mathrm{Sub}
_{\varphi }(\Omega )$. In this case,
\begin{align*}
u_{0} &= e^{\alpha } - 1, \qquad Z^{(-1)} (v) = \exp \left \{  v^{\frac{1}{
\alpha }}\right \}  - 1,
\qquad Z(v + u_{0}) = \ln ^{\alpha }(v + e^{
\alpha }),
\\
C_{Z}^{2}& = \int _{-\infty }^{\infty }\int _{-\infty }^{\infty }\Big (
\ln ^{\alpha }\Big (
\frac{|\lambda |}{2} + e^{\alpha } \Big ) +
\ln ^{\alpha }\Big ( \frac{1}{2}
\Big | \sum _{k=1}^{N} a_{k} \lambda ^{2k+
1} (-1)^{k} \Big | + e^{\alpha } \Big ) \Big )
\nonumber
\\&
\times \Big ( \ln ^{\alpha }\Big (
\frac{|\mu |}{2} + e^{\alpha } \Big ) +
\ln ^{\alpha }\Big ( \frac{1}{2}
\Big | \sum _{k=1}^{N} a_{k} \mu ^{2k+ 1}
(-1)^{k} \Big | + e^{\alpha } \Big )
\Big ) \, d|\Gamma _{y} (\lambda ,
\mu )|.
\end{align*}
The above integral converges if the following integral converges
%
\begin{equation}
\label{int_ex1}
\int _{-\infty }^{\infty }\int _{-\infty }^{\infty }\ln ^{\alpha }(|
\lambda | + e^{\alpha })
\ln ^{\alpha }(|\mu | + e^{\alpha }) \, d|
\Gamma _{y} (\lambda , \mu )|.
\end{equation}

That is, if condition (\ref{int_ex1}) holds true, then
Theorem~\ref{thm_sup_solution} holds with
%
\begin{align}
\label{hat_I_r_other}
\hat{I}_{r} (\delta )
& = \int _{0}^{\delta }r \left ( \left [ \Big (
\frac{b-a}{2} \Big ( \exp \left \{
\Big ( \frac{2 C_{Z} C_{y}}{s}
\Big )^{\frac{1}{\alpha }}\right \}  - 1-(e^{\alpha }-1) \Big ) +1
\Big ) \right .\right .
\nonumber
\\
& \left .\left .\times \Big ( \frac{d-c}{2} \Big ( \exp \left \{
\Big ( \frac{2 C_{Z} C_{y}}{s} \Big )^{\frac{1}{\alpha }}\right \}  - 1-(e
^{\alpha }-1) \Big ) +1\Big )\right ] \right ) \,ds
\nonumber
\\
&=\int _{0}^{\delta }r \left ( \left [ \Big ( \frac{b-a}{2} \Big (
\exp \left \{
\Big ( \frac{2 C_{Z} C_{y}}{s}
\Big )^{\frac{1}{\alpha }}\right \}  - e^{\alpha }\Big ) +1 \Big ) \right .\right .
\nonumber
\\
& \left .\left .\times \Big ( \frac{d-c}{2} \Big ( \exp \left \{
\Big ( \frac{2 C_{Z} C_{y}}{s} \Big )^{\frac{1}{\alpha }}\right \}  - e
^{\alpha }\xch{\Big )}{) \Big )} +1\Big )\right ] \right ) \,ds.
\end{align}
Let $ \frac{d-c}{2}\, e^{\alpha } > 1$ and $ \frac{b -a}{2} e^{\alpha
} > 1$.

That is, we choose some $\alpha >\max  \bigl\{  1,\ln \bigl (\frac{2}{b-a}\bigr),
\ln \bigl(\frac{2}{d-c}\bigr)\bigr\}  $. Then
%
\begin{align}
\label{hat_I_r_other_estimate1}
\hat{I}_{r} (\delta )
& \leq \int _{0}^{\delta }r \left (
\frac{(b-a)(d-c)}{4} \exp \left \{
2\Big ( \frac{2 C_{Z} C_{y}}{s}
\Big )^{\frac{1}{\alpha }}\right \}  \right ) \,ds
\nonumber
\\
& \leq \int _{0}^{\delta }r \left ( \frac{\varkappa ^{2}}{4} \exp
\left \{
2\Big ( \frac{2 C_{Z} C_{y}}{s}
\Big )^{\frac{1}{\alpha }}\right \}  \right ) \,ds.
\end{align}
In our case, the function $r(v)=\ln v$, $v\ge 1$, satisfies the
conditions defined in Theorem~\ref{Theorem5_KO2016} and is convenient
for the estimation of $\hat{I}_{r} (\delta )$. Substituting it in the
expression above, we get
%
\begin{align}
\label{hat_I_r_other_estimate2}
\hat{I}_{r} (\delta )
& \leq \int _{0}^{\delta }\ln \left ( \frac{
\varkappa ^{2}}{4} \exp \left \{
2\Big ( \frac{2 C_{Z} C_{y}}{s}
\Big )^{\frac{1}{\alpha }}\right \}  \right ) \,ds
\nonumber
\\
&=\delta \ln \left (\frac{\varkappa ^{2}}{4}\right )+\int _{0}^{\delta
}2\Big ( \frac{2 C_{Z} C_{y}}{s} \Big )^{\frac{1}{\alpha }} \,ds
\nonumber
\\
&= \delta \ln \left (\frac{\varkappa ^{2}}{4}\right )+\frac{2({2 C
_{Z} C_{y}})^{\frac{1}{\alpha }}\delta ^{1-\frac{1}{\alpha }}}{1-\frac{1}{
\alpha }}.
\end{align}
Since $r^{(-1)}(v)=e^{v},\,v\ge 0$, for such $\theta \in (0,1)$
that $\theta \varepsilon _{0}<\gamma _{0}$ we obtain
\begin{equation*}
r^{(-1)}\left (\frac{\hat{I}_{r}(\min (\theta \varepsilon _{0}, \gamma
_{0}))}{\theta \varepsilon _{0}}\right )\le \exp \left \{  \ln \left (\frac{
\varkappa ^{2}}{4}\right )+\frac{2\alpha }{\alpha -1}\left (\frac{2 C
_{Z}
C_{y}}{\theta \varepsilon _{0}}\right )^{\frac{1}{\alpha }}\right \}
\end{equation*}
and, finally, for all $ u > 0$ the following inequality emerges
\begin{eqnarray*}
\label{P_sup_U_ex2}
P \big \{ \sup _{\substack{a \le t \le b
\\
c \le x \le d}}
|U(t, x)| > u \big \}
&\leq & 2\hat{A}(\theta ,u)
\\
&\leq & 2\exp \left \{  - \left (\frac{u (1-\theta )}{\varepsilon _{0}}+1\right )
\ln \left (\frac{u (1-\theta )}{\varepsilon _{0}}+1\right )\right .
\\
&+&\left . \frac{u (1-\theta )}{\varepsilon _{0}} + \ln \left (\frac{
\varkappa ^{2}}{4}\right )+\frac{2\alpha }{\alpha -1}\left (\frac{2 C
_{Z}
C_{y}}{\theta \varepsilon _{0}}\right )^{\frac{1}{\alpha }}\right \}  .
\end{eqnarray*}

\end{ex}

\begin{appendix}
\section*{Appendix A. Proof of Theorem \ref{thm_sigma}}

The derivation of the bound \eqref{cond_sigma} for the process
$U(t, x)$ is based on the particular structure of this process and on
the use of the property \eqref{**} of admissible function~$Z$.

Firstly note that the process $ U(t, x)$ is separable since
$ U(t, x)$ is continuous with probability one. The process $U(t, x)$ is
strictly $\varphi $-sub-Gaussian with the determining constant
$C_{y}$, and, therefore, we can write:
\begin{equation*}
\sup _{ \substack{|t- t_{1}| \leq h
\\
|x-x_{1}| \leq h
\\
}}
\tau _{\varphi }( U(t, x) - U(t_{1}, x_{1})) \leq C_{y}
\sup _{ \substack{|t- t_{1}| \leq h
\\
|x-x_{1}| \leq h
\\
}}\left (\mathsf{E}(U(t, x) - U (t_{1}, x_{1}))^{2} \right )^{1/2}.
\end{equation*}
We can also estimate $\varepsilon _{0} =\sup  _{(t, x)\in D}
\tau _{\varphi }(U(t, x)) $, where $D=\{a \leq t \leq b$, $c \leq x
\leq d\}$, as follows:
%
\begin{align}
\varepsilon _{0}
& \leq C_{y} \sup \limits _{(t, x)\in D}
\left (\mathsf{E}|U(t, x)|^{2}\right )^{\frac{1}{2}}
\nonumber
\\
& \le C_{y} \sup \limits _{(t, x)\in D}\left (\int _{-\infty }^{\infty
}\int _{-\infty }^{\infty }\big | I(t, x, \lambda )
I(t, x, \mu )
\big | \, d|\Gamma _{y} (\lambda , \mu )|\right )^{\frac{1}{2}}
\nonumber
\\
& \leq C_{y} \left (\int _{-\infty }^{\infty }\int _{-\infty }^{\infty
}\,
d|\Gamma _{y} (\lambda , \mu )|\right )^{\frac{1}{2}} =: \Gamma .
\tag{A.1}\label{A.1}
\end{align}

It is enough to consider one of the representations for initial
condition, $\eta (x)=\int _{-\infty }^{\infty } \sin xu \,\mathrm{d}y(u)$
or $\eta (x)=\int _{-\infty }^{\infty } \cos xu \,\mathrm{d}y(u)$, since
the proofs are similar for both cases.

Let us consider $\mathsf{E}(U(t,x) -U(t_{1}, x_{1}))^{2}$ for
$\kappa (u) = \cos (u)$:
%
\begin{align}
\mathsf{E}(U(t, x) - U(t_{1}, x_{1}))^{2} = \left ( \int _{-\infty }
^{\infty }(I(t, x, \lambda ) -
I(t_{1}, x_{1} , \lambda ))\, dy(
\lambda )\right )^{2}
\nonumber
\\
\leq \int _{-\infty }^{\infty }\int _{-\infty }^{\infty }|I(t, x,
\lambda ) -
I(t_{1}, x_{1} , \lambda )|
|I(t, x, \mu ) - I(t_{1}, x
_{1} , \mu )| \, d |\Gamma _{y} (\lambda , \mu )|.\nonumber
\end{align}
We can write $|I(t, x, \lambda ) - I(t_{1}, x_{1} , \lambda )| = |
\cos A - \cos B|$, where
\begin{equation*}
A = x\lambda + t \sum _{k=1}^{N} a_{k} \lambda ^{2k+1} (-1)^{k}, \qquad
B = x_{1}\lambda + t_{1} \sum _{k=1}^{N} a_{k} \lambda ^{2k+1} (-1)^{k}.
\end{equation*}
Thus
\begin{eqnarray*}
|I(t, x, \lambda ) - I(t_{1}, x_{1}, \lambda )|
& =& 2 \Big |
\sin \frac{A+ B}{2}
\sin \frac{B- A}{2} \Big |
\\
&\leq & 2 \Big | \sin \frac{B-A}{2}\Big |
= 2 \Big |\sin (C+D)\Big |,
\end{eqnarray*}
where
\begin{equation*}
C = \frac{\lambda (x_{1} - x)}{2}, \qquad D = \frac{t_{1} - t}{2}
\sum _{k=1}^{N} a_{k} \lambda ^{2k+1} (-1)^{k}.
\end{equation*}
Therefore,
\begin{align*}
2 | \sin (C+D)|
& = 2| \sin C \cos D + \cos C \sin D|
\leq 2( |\sin C|
+ |\sin D|)
\\
& = 2 \Big ( \Big | \sin \frac{\lambda (x_{1} -x)}{2} \Big | + \Big |
\sin \frac{(t_{1} - t)}{2} \sum _{k=1}^{N} a_{k} \lambda ^{2k+1} (-1)^{k}
\Big | \Big ).
\end{align*}
Choose some admissible function $\{Z( x),x\ge 0\}$ for the space
$\mathrm{Sub}_{\varphi }(\Omega )$. In view of Lemma~\ref{lmm12}, we can
write
\begin{align*}
| I(t,x, \lambda ) - I( t_{1}, x_{1}, \lambda )|
&\leq 2 Z^{-1}
\Big ( \frac{1}{|x - x_{1}| }+ u_{0}\Big ) Z \Big (
\frac{|\lambda | }{2}+u_{0} \Big )
\\
&+ 2 \, Z^{-1} \Big (\frac{1}{|t- t_{1}|} +u_{0} \Big ) Z \Big (
\frac{1}{2} \Big | \sum _{k=1}^{N} a_{k} \lambda ^{2k+1} (-1)^{k} \Big | +
u_{0} \Big ).
\end{align*}
Thus, we obtain:
%
\begin{align}
& \sup _{ \substack{|t- t_{1}| \leq h \\ |x-x_{1}| \leq h \\}}
\tau _{\varphi }( U(t, x) - U(t_{1}, x_{1})) \leq C_{y}
\sup _{ \substack{|t- t_{1}| \leq h \\ |x-x_{1}| \leq h \\}}
\left (\mathsf{E}(U(t, x) - U (t_{1}, x_{1}))^{2} \right )^{1/2}
\nonumber
\\
& \leq C_{y} \frac{2}{Z \big ( \frac{1}{h} + u_{0} \big )} \left [
\int \limits _{-\infty }^{\infty }\int \limits _{-\infty }^{\infty }\left (
Z
\Big ( \frac{|\lambda |}{2} + u_{0} \Big ) + Z\Big ( \frac{1}{2}
\Big |
\sum _{k=1}^{N} a_{k} \lambda ^{2k+1} (-1)^{k} \Big | + u_{0}
\Big )
\right )\right .
\nonumber
\\
& \times \left . \left ( Z \big (
\frac{|\mu |}{2} +u_{0}\big ) + Z
\Big ( \frac{1}{2} \Big | \sum _{k=1}^{N}
a_{k} \mu ^{2k+1} (-1)^{k}
\Big | + u_{0} \Big ) \right )\, d|\Gamma _{y}
(\lambda , \mu )|\right ]
^{1/2}
\nonumber
\\
& = C_{y} C_{Z} \frac{2}{Z \big (\frac{1}{h} +u_{0} \big )}.\nonumber
\label{(5.9)}
\end{align}
For $\kappa (u) = \sin u$ we have the same inequality. So, in the
notations of Theorem~\ref{sup_X}
\begin{equation*}
\sigma (h ) = 2 C_{y} C_{Z} \Big ( Z\Big ( \frac{1}{h} +u_{0} \Big )
\Big )^{-1},
\quad 0<h<\max (b-a,d-c).
\end{equation*}
\end{appendix}

\begin{acknowledgement}
We are grateful to editors and reviewers for valuable remarks and
suggestions, which helped us to improve the paper significantly.
\end{acknowledgement}



\begin{thebibliography}{99}

\bibitem{BKLO}
\begin{barticle}
\bauthor{\bsnm{Beghin}, \binits{L.}},
\bauthor{\bsnm{Knopova}, \binits{V.P.}},
\bauthor{\bsnm{Leonenko}, \binits{N.N.}},
\bauthor{\bsnm{Orsingher}, \binits{E.}}:
\batitle{Gaussian limiting behavior of the rescaled solution to the linear
Korteweg-de-Vries equation with random initial conditions}.
\bjtitle{J. Stat. Phys.}
\bvolume{99}(\bissue{3/4}),
\bfpage{769}--\blpage{781}
(\byear{2000}).
\bid{doi={10.1023/\\A:1018687327580}, mr={1766908}}
\end{barticle}
%
\OrigBibText
Beghin L., Knopova V.P., Leonenko N.N., Orsingher E. Gaussian
limiting behavior of the rescaled solution to the linear
Korteweg-de-Vries equation with random initial conditions. J. Stat.
Phys., 99, n.3/4, 769--781 (2000).
\endOrigBibText
\bptok{structpyb}
\endbibitem

\bibitem{BKOS}
\begin{barticle}
\bauthor{\bsnm{Beghin}, \binits{L.}},
\bauthor{\bsnm{Kozachenko}, \binits{Yu.}},
\bauthor{\bsnm{Orsingher}, \binits{E.}},
\bauthor{\bsnm{Sakhno}, \binits{L.}}:
\batitle{On the Solutions of Linear Odd-Order Heat-Type Equations with Random Initial Conditions}.
\bjtitle{J. Stat. Phys.}
\bvolume{127}(\bissue{4}),
\bfpage{721}--\blpage{739}
(\byear{2007}).
\bid{doi={10.1007/s10955-007-9309-x}, mr={2319850}}
\end{barticle}
%
\OrigBibText
Beghin L., Kozachenko Yu., Orsingher E., Sakhno L. On the Solutions
of Linear Odd-Order Heat-Type Equations with Random Initial Conditions.
J. Stat. Phys., 127, Issue 4, 721--739 (2007).
\endOrigBibText
\bptok{structpyb}
\endbibitem

\bibitem{BK}
\begin{bbook}
\bauthor{\bsnm{Buldygin}, \binits{V.V.}},
\bauthor{\bsnm{Kozachenko}, \binits{Yu.V.}}:
\bbtitle{Metric characterization of random variables and random processes}.
\bsertitle{Translations of Mathematical Monographs},
vol.~\bseriesno{188}.
\bpublisher{AMS, American Mathematical Society},
\blocation{Providence, RI}
(\byear{2000}).
\bcomment{257~pp.}
\bid{mr={1743716}}
\end{bbook}
%
\OrigBibText
Buldygin V.V., Kozachenko Yu.V. Metric characterization of random
variables and random processes. Translations of Mathematical
Monographs. 188. Providence, RI: AMS, American Mathematical Society. 257
p. (2000).
\endOrigBibText
\bptok{structpyb}
\endbibitem

\bibitem{GKN}
\begin{barticle}
\bauthor{\bsnm{Giuliano Antonini}, \binits{R.}},
\bauthor{\bsnm{Kozachenko}, \binits{Yu.V.}},
\bauthor{\bsnm{Nikitina}, \binits{T.}}:
\batitle{Spaces of $\varphi$-subgaussian random variables}.
\bjtitle{Rend. Accad. Naz. Sci. XL Mem. Mat. Appl. 121}
\bvolume{XXVII},
\bfpage{95}--\blpage{124}
(\byear{2003}).
\bid{mr={2056414}}
\end{barticle}
%
\OrigBibText
Giuliano Antonini R., Kozachenko Yu.V., Nikitina T. Spaces of
$\varphi $-subgaussian random variables. Rendiconti Accademia Nazionale
delle Scienze XL. Memorie di Matematica e Applicazioni 121. Vol. XXVII,
95-124 (2003).
\endOrigBibText
\bptok{structpyb}
\endbibitem

\bibitem{KK}
\begin{barticle}
\bauthor{\bsnm{Kozachenko}, \binits{Yu.V.}},
\bauthor{\bsnm{Koval'chuk}, \binits{Yu.A.}}:
\batitle{Boundary value problems with random initial conditions and series
of functions of $\mathit{Sub}_{\varphi}(\Omega)$}.
\bjtitle{Ukr. Math. J.}
\bvolume{50}(\bissue{4}),
\bfpage{572}--\blpage{585}
(\byear{1998}).
\bid{doi={10.1007/BF02487389}, mr={1698149}}
\end{barticle}
%
\OrigBibText
Kozachenko Yu.V., Koval'chuk Yu.A. Boundary value problems with
random initial conditions and series of functions of $Sub_{\varphi }(
\Omega )$. Ukrainian Math. J., 50(4), 572--585 (1998).
\endOrigBibText
\bptok{structpyb}
\endbibitem

\bibitem{KO2016}
\begin{barticle}
\bauthor{\bsnm{Kozachenko}, \binits{Yu.}},
\bauthor{\bsnm{Olenko}, \binits{A.}}:
\batitle{Whitaker-Kotelnikov-Shanon approximation of $\varphi $-sub-Gaussian random processes}.
\bjtitle{J. Math. Anal. Appl.}
\bvolume{442}(\bissue{2}),
\bfpage{924}--\blpage{946}
(\byear{2016}).
\bid{doi={10.1016/j.jmaa.2016.05.052}, mr={3514327}}
\end{barticle}
%
\OrigBibText
Kozachenko Yu., Olenko A. Whitaker-Kotelnikov-Shanon approximation of
$\varphi $-sub-Gaussian random processes. Journal of Mathematical
Analysis and Applications, Vol.442, Iss.2, pp. 924 -- 946, -- 2016.
\endOrigBibText
\bptok{structpyb}
\endbibitem

\bibitem{KozOrsSakhVas_JOSS2018}
\begin{barticle}
\bauthor{\bsnm{Kozachenko}, \binits{Yu.}},
\bauthor{\bsnm{Orsingher}, \binits{E.}},
\bauthor{\bsnm{Sakhno}, \binits{L.}},
\bauthor{\bsnm{Vasylyk}, \binits{O.}}:
\batitle{Estimates for Functionals of Solutions to Higher-Order Heat-Type
Equations with Random Initial Conditions}.
\bjtitle{J. Stat. Phys.}
\bvolume{172}(\bissue{6}),
\bfpage{1641}--\blpage{1662}
(\byear{2018}).
\bid{doi={10.1007/\\s10955-018-2111-0}, mr={3856958}}
\end{barticle}
%
\OrigBibText
Kozachenko Yu., Orsingher E., Sakhno L., Vasylyk O. \emph{Estimates for
Functionals of Solutions to Higher-Order Heat-Type Equations with Random
Initial Conditions.} Journal of Statistical Physics, \textbf{172}
(2018), Iss.6, pp. 1641- 1662.
\endOrigBibText
\bptok{structpyb}
\endbibitem

\bibitem{KO}
\begin{barticle}
\bauthor{\bsnm{Kozachenko}, \binits{Yu.V.}},
\bauthor{\bsnm{Ostrovskij}, \binits{E.I.}}:
\batitle{Banach spaces of random variables of sub-Gaussian type}.
\bjtitle{Theory Probab. Math. Stat.}
\bvolume{32},
\bfpage{45}--\blpage{56}
(\byear{1986}).
\bid{mr={0882158}}
\end{barticle}
%
\OrigBibText
Kozachenko Yu.V., Ostrovskij E.I. Banach spaces of random variables
of sub-Gaussian type. Theory Probab. Math. Stat., 32, 45--56 (1986).
\endOrigBibText
\bptok{structpyb}
\endbibitem

\bibitem{KR}
\begin{barticle}
\bauthor{\bsnm{Kozachenko}, \binits{Yu.V.}},
\bauthor{\bsnm{Rozora}, \binits{I.V.}}:
\batitle{Simulation of Gaussian stochastic processes}.
\bjtitle{Random Oper. Stoch. Equ.}
\bvolume{11}(\bissue{3}),
\bfpage{275}--\blpage{296}
(\byear{2003}).
\bid{doi={10.1163/\\156939703771378626}, mr={2009187}}
\end{barticle}
%
\OrigBibText
Kozachenko Yu.V., Rozora I.V. Simulation of Gaussian stochastic
processes. Random Oper. and Stoch. Equ., 11(3), 275--296 (2003).
\endOrigBibText
\bptok{structpyb}
\endbibitem

\bibitem{KRP}
\begin{bbook}
\bauthor{\bsnm{Kozachenko}, \binits{Yu.V.}},
\bauthor{\bsnm{Rozora}, \binits{I.}},
\bauthor{\bsnm{Pogorilyak}, \binits{O.O.}},
\bauthor{\bsnm{Tegza}, \binits{A.M.}}:
\bbtitle{Simulation of Stochastic Processes with Given Accuracy and Reliability}.
\bpublisher{ISTE Press Ltd, London and Elsevier Ltd},
\blocation{Oxford}
(\byear{2017}).
\bcomment{346 p.}
\bid{mr={3644192}}
\end{bbook}
%
\OrigBibText
Kozachenko Yu.V., Rozora I., Pogorilyak O.O., Tegza A.M. Simulation
of Stochastic Processes with Given Accuracy and Reliability. ISTE Press
Ltd, London and Elsevier Ltd Oxford, 346 p. (2017).
\endOrigBibText
\bptok{structpyb}
\endbibitem

\bibitem{Kr.Rut}
\begin{bbook}
\bauthor{\bsnm{Krasnosel'skii}, \binits{M.A.}},
\bauthor{\bsnm{Rutickii}, \binits{Ya.B.}}:
\bbtitle{Convex Functions and Orlicz Spaces}.
\bpublisher{NoordHoff},
\blocation{Groningen}
(\byear{1961}).
\bcomment{249 p.}
\bid{mr={0126722}}
\end{bbook}
%
\OrigBibText
Krasnosel'skii M.A., Rutickii Ya.B. Convex Functions and Orlicz
Spaces. NoordHoff, Groningen. 249 p. (1961).
\endOrigBibText
\bptok{structpyb}
\endbibitem

\bibitem{L}
\begin{bbook}
\bauthor{\bsnm{Lo\`{e}ve}, \binits{M.}}:
\bbtitle{Probability Theory. Foundations. Random sequences}.
\bpublisher{D. van Nostrand Co., Inc. XV},
\blocation{New York}
(\byear{1955}).
\bcomment{515 p.}
\bid{mr={0066573}}
\end{bbook}
%
\OrigBibText
Lo\`{e}ve M. Probability Theory. Foundations. Random sequences. New
York: D. van. Nostrand Co., Inc. XV. 515 p. (1955).
\endOrigBibText
\bptok{structpyb}
\endbibitem

\bibitem{Orsingher2012}
\begin{barticle}
\bauthor{\bsnm{Orsingher}, \binits{E.}},
\bauthor{\bsnm{D'Ovidio}, \binits{M.}}:
\batitle{Probabilistic representation of fundamental solutions to
$\frac{\partial u}{\partial t}=\kappa _{m}\frac{\partial^{m}u}{\partial x^{m}}$}.
\bjtitle{Electron. Commun. Probab.}
\bvolume{17},
\bfpage{1}--\blpage{12}
(\byear{2012}).
\bid{doi={10.1016/j.elecom.2011.12.022}, doi={\\10.1214/ECP.v17-1885}, mr={2965747}}
\end{barticle}
%
\OrigBibText
Orsingher E., D'Ovidio M. Probabilistic representation of
fundamental solutions to $\frac{\partial u}{\partial t}=\kappa _{m}\frac{
\partial ^{m}u}{\partial x^{m}}$. Electron. Commun. Probab.,
17, 1--12 (2012).
\endOrigBibText
\bptok{structpyb}
\endbibitem

\bibitem{Rao}
\begin{bchapter}
\bauthor{\bsnm{Rao}, \binits{M.M.}}:
\bctitle{Harmonizable, Cram\'{e}r, and Karhunen classes of processes}.
In: \beditor{\bsnm{Hannan}, \binits{D.E.J.}}, \betal\ (eds.)
\bbtitle{Handbook of Statistics: Time Series in Time},
vol.~\bseriesno{5},
pp.~\bfpage{279}--\blpage{310}.
\bpublisher{Elsevier},
\blocation{Amsterdam}
(\byear{1985}).
\bid{doi={10.1016/S0169-7161(85)05012-X}, mr={0831752}}
\end{bchapter}
%
\OrigBibText
Rao M. M. Harmonizable, Cram\'{e}r, and Karhunen classes of processes,
Handbook of Statistics: Time Series in Time Domain (E. J. Hannan et al.,
eds.), vol. 5, Elsevier, Amsterdam, 1985, pp. 279--310.
\endOrigBibText
\bptok{structpyb}
\endbibitem

\bibitem{Swift}
\begin{barticle}
\bauthor{\bsnm{Swift}, \binits{R.J.}}:
\batitle{Covariance analysis and associated spectra for classes of nonstationary processes}.
\bjtitle{J. Stat. Plan. Inference}
\bvolume{100}(\bissue{2}),
\bfpage{145}--\blpage{157}
(\byear{2002}).
\bid{doi={\\10.1016/S0378-3758(01)00129-X}, mr={1877184}}
\end{barticle}
%
\OrigBibText
Swift R. J. Covariance analysis and associated spectra for classes of
nonstationary processes. J. Statist. Plann. Inference 100 (2002), no.
2, 145--157.
\endOrigBibText
\bptok{structpyb}
\endbibitem

\bibitem{Tao}
\begin{bbook}
\bauthor{\bsnm{Tao}, \binits{T.}}:
\bbtitle{Nonlinear Dispersive Equations: Local and Global Analysis}.
\bsertitle{CBMS Regional Conference Series in Mathematics},
vol.~\bseriesno{106},
\bpublisher{American Mathematical Soc.}
(\byear{2006}).
\bcomment{373~pp.}
\bid{doi={10.1090/cbms/106}, mr={2233925}}
\end{bbook}
%
\OrigBibText
Tao T. Nonlinear Dispersive Equations: Local and Global Analysis. CBMS
Regional Conference Series in Mathematics, Volume 106, 2006. American
Mathematical Soc. 373 pp.
\endOrigBibText
\bptok{structpyb}
\endbibitem

\bibitem{VKY2005}
\begin{barticle}
\bauthor{\bsnm{Vasylyk}, \binits{O.I.}},
\bauthor{\bsnm{Kozachenko}, \binits{Yu.V.}},
\bauthor{\bsnm{Yamnenko}, \binits{R.}}:
\batitle{Upper estimate of overrunning by $Sub_{\varphi}(\Omega)$ random process the level specified by continuous function}.
\bjtitle{Random Oper. Stoch. Equ.}
\bvolume{13}(\bissue{2}),
\bfpage{111}--\blpage{128}
(\byear{2005}).
\bid{doi={10.1163/\\156939705323383832}, mr={2152102}}
\end{barticle}
%
\OrigBibText
Vasylyk O.I., Kozachenko Yu.V., Yamnenko R. Upper estimate of
overrunning by $Sub_{\varphi }(\Omega )$ random process the level
specified by continuous function. Random Oper. and Stoch. Equ.,
13(2), 111--128 (2005).
\endOrigBibText
\bptok{structpyb}%
\endbibitem

\end{thebibliography}
\end{document}